\documentclass[12pt,a4paper]{article}
\usepackage[latin1]{inputenc}
\usepackage{amsmath, amsthm}
\usepackage{amsfonts}
\usepackage{amssymb}
\usepackage{makeidx}
\usepackage{graphicx}
\usepackage{multirow}
\newtheorem{definition}{Definition}[section]
\newtheorem{proposition}{Proposition}[section]
\newtheorem{theorem}{Theorem}[section]
\newtheorem{remark}{Remark}[section]
\newtheorem{lemma}{Lemma}[section]
\newtheorem{corollary}{Corollary}[section]
\newtheorem{example}{Example}[section]
\usepackage[width=180.00mm, height=195.00mm]{geometry}
\usepackage{color}
\usepackage{pdflscape}
\usepackage{longtable, float}

\usepackage{hyperref}

\author{Ratan Lal, Alka Choudhary and Vipul Kakkar}
\title{Determinant for automorphisms of Zappa-Sz\'{e}p product of groups}
\allowdisplaybreaks
\date{}
\begin{document}

	\maketitle
	\begin{flushleft}
		\textrm{Desh Bhagat Pandit Chetan Dev Government College of Education, Faridkot, Punjab, India}\\
	\textrm{Manipal University, Jaipur, Rajasthan, India}\\
	\textrm{Central University of Rajasthan, Ajmer, Rajasthan, India}\\
\end{flushleft}
	E-mails: \url{vermarattan789@gmail.com}, \url{alkababal@gmail.com}, \url{vplkakkar@gmail.com}
	
	\abstract{A description of the endomorphisms of Zappa-Sz\'{e}p products of two groups as a group of $2\times 2$ matrices of maps is discussed in \cite{detsem}. Using this description, we have studied the concept of determinant for the endomorphisms of Zappa-Sz\'{e}p products of two groups. A characterization of the invertible endomorphisms is given with the help of the tools developed using the determinants. The determinants are also computed for the alternating group $A_{5}$. }\\
		
\noindent	\rm{Keywords: Automorphism group, Determinant, Zappa-Sz\'{e}p product, alternating group}

	\section{Introduction}
	Factorization theory plays a central role in structural group theory, offering a systematic framework to decompose complex groups into simpler constituent factors. When a group $G$ factors as a product of two subgroups $H$ and $K$ such that $G = HK$ and $H \cap K = \{1\}$, the algebraic structure of $G$ is entirely dictated by the mutual interactions between $H$ and $K$. If either of the factors is a normal subgroup, then we get direct product or semidirect product. When neither factor is normal, $G$ generalizes to the \emph{Zappa--Szép product} (also known as the mutual semidirect product or knit product).
	
	A group $G$ is called the internal Zappa-Sz\'{e}p product of its two subgroups $H$ and $K$ if $G=HK$ and $H\cap K = \{1\}$. If $G$ is the internal Zappa-Sz\'{e}p product of $H$ and $K$, then $K$ appears as a right transversal to $H$ in $G$. Let $h\in H$ and $k\in K$. Then $kh=\sigma(k,h)\tau(k,h)$, where $\sigma(k,h)\in H$ and $\tau(k,h)\in K$. This determines the maps $\sigma: K \times H \rightarrow H$ and $\tau: K\times H \rightarrow K$ defined by $\sigma(k,h) = \sigma_{k}(h)$ and $\tau(k,h) = \tau_{h}(k)$. These maps are called the matched pair of groups or the mutual actions of $H$ and $K$. We denote these maps by $\sigma(k, h) = k\cdot h$ and $\tau(k, h) = k^{h}$ for all $h\in H$ and $k\in K$. These maps satisfy the following conditions for all $h,h^{\prime} \in H$ and $k,k^{\prime}\in K$ (see \cite{af}):	
	\begin{itemize}
		\item[$(C1)$] $1\cdot h = h$ and $k^{1} = k$,
		\item[($C2$)] $k\cdot 1 = 1 = 1^{h}$,
		\item[$(C3)$] $kk^{\prime}\cdot h = k\cdot (k^{\prime}\cdot h)$,
		\item[($C4$)] $(kk^{\prime})^{h} = k^{k^{\prime}\cdot h}{k^{\prime}}^{h}$,
		\item[($C5$)] $k\cdot (hh^{\prime}) = (k\cdot h)(k^{h}\cdot h^{\prime})$,
		\item[$(C6)$] $k^{hh^{\prime}} = (k^{h})^{h^{\prime}}$,
	\end{itemize} 

	On the other hand, let $H$ and $K$ be two groups and define two maps \[\sigma: K\times H\longrightarrow H\; \text{and}\; \tau: K\times H \longrightarrow K\]
	by $\sigma(k, h) = k\cdot h$ and $\tau(k, h) = k^{h}$ satisfying the conditions $(C1)-(C6)$ for all $h\in H$ and $k\in K$. Then the set $H\times K$ under the binary operation defined for all $(h, k), (h^{\prime}, k^{\prime})\in H\times K$ as
	\begin{equation*}
	(h, k)(h^{\prime}, k^{\prime}) = (h(k\cdot h^{\prime}), k^{h^{\prime}}k)
	\end{equation*}
	forms a group called the external Zappa-Sz\'{e}p product of the groups $H$ and $K$ and is denoted by $H\bowtie K$. By (\cite[Proposition 2.4, p. 4]{af}), the internal Zappa-Sz\'{e}p product is isomorphic to the external Zappa-Sz\'{e}p product. Therefore, we will identify the internal Zappa-Sz\'{e}p product  with the external Zappa-Sz\'{e}p product.
		
The study of automorphism groups of groups have attracted many algebraists over the years. The automorphism groups of direct products \cite{bcm, jb}, semidirect products \cite{cur, jd, hsu} and Zappa-Sz\'{e}p products \cite{zapfix, zap} have been studied as the group of $2\times 2$ matrices of maps that satisfy some specific conditions. 

In \cite{mb}, the author put a step forward and studied the matrix characterizations of the automorphism groups and given some algorithms for an endomorphism of direct product of groups to be invertible. For an endomorphism $\phi$ of direct product of two groups $H$ and $K$ represented by $\begin{pmatrix}
\alpha & \beta \\ \gamma & \delta
\end{pmatrix}$, the author in \cite{mb} have defined the $H$--determinant as $\det_{H}(\phi) = \alpha - \beta\delta^{-1}\gamma$ (when $\delta$ is an invertible map) and $K$--determinant as $\det_{K}(\phi) = \delta - \gamma\alpha^{-1}\beta$ (when $\alpha$ is an invertible map). Both of these determinants yields an effective criterion for invertibility of the endomorphism $\phi$.  A similar theory have recently been explored for the semidirect products of two groups \cite{detsem}.

Since Zappa-Sz\'{e}p product of two groups is a natural generalization of the semidirect product of two groups, a natural question arises: \emph{Can the determinant framework for the endomorphisms of direct products and semidirect products be extended to construct a determinant framework for endomorphisms of the Zappa-Sz\'{e}p product of two groups?} 

This paper provides an affirmative answer to the above question. The main findings and the structure of the paper are organized as follows:
\begin{itemize}
	\item In \textbf{Section \ref{s2}}, a monoid isomorphism between the endomorphism monoid $\mathrm{End}(H \bowtie K)$ and a monoid consisting of $2 \times 2$ matrices of maps satisfying some conditions is established (Theorem \ref{t1}).
	\item In \textbf{Section \ref{s3}}, a precise criteria for an endomorphism of $H \bowtie K$ to be an automorphism is established (Theorem \ref{t32} and \ref{t33}). Furthermore, it is proved that $\det_{H}$ is invertible if and only if $\det_{K}$ is invertible (Theorem \ref{t34}).
	\item In \textbf{Section \ref{s4}}, we demonstrate the theoretical framework by explicitly computing both the determinants $\det_{H}$ and $\det_{K}$ for all the endomorphisms of the alternating group $A_{5}$.
\end{itemize}


	\paragraph{Notation and Conventions.} Let $H$ and $K$ be two groups. Then throughout this paper, $\mathrm{Hom}(H, K)$, $\mathrm{End}(H)$, and $\mathrm{CHom}(H, K)$ denote the group of homomorphisms from $H$ to $K$, the monoid of endomorphisms of $H$, and the set of crossed homomorphisms from $H$ to $K$, respectively. $Z(H)$ denotes the center of $H$, $I_{H}$ denotes the identity map on $H$, and $\gcd$ denotes the greatest common divisor.
	
	
	\section{Preliminaries}\label{s2}
	Let $U$, $V$ and $W$ be any groups. Let $Map(U, V)$ denote the set of all maps from $U$ to $V$. If $\phi, \psi \in Map(U, V)$ and $\eta \in Map(V, W)$, then $\phi + \psi \in	Map(U, V)$ is defined by $(\phi + \psi)(u) = \phi(u)\psi(u)$, $\eta \phi\in Map(U, W)$ is defined by $\eta\phi(u) = \eta(\phi(u))$, $-\phi \in Map(U, V)$ is defined as $-\phi(u) = \phi(u)^{-1}$ and $\phi \cdot \psi$, $\phi^{\psi}\in Map(U, V)$ are defined by $(\phi \cdot \psi)(u) = \phi(u)\cdot \psi(u)$, $(\phi^{\psi})(u) = \phi(u)^{\psi(u)}$ for all $u\in U$.

Let $H$ and $K$ be groups and let
\begin{equation*}
\tilde{\mathcal{M}} = \left\{\begin{pmatrix}
\alpha & \beta\\ \gamma & \delta
\end{pmatrix}
\mid \begin{matrix}
\alpha \in Map(H, H), & \beta\in Map(K, H)\\ \gamma\in Map(H, K), & \text{and}\;\delta \in Map(K, K)
\end{matrix} \right\}.
\end{equation*}

We define $\mathcal{M}$ to be the subset of $\tilde{\mathcal{M}}$ consisting of all matrices
\begin{align*}
\begin{pmatrix}
\alpha & \beta \\ \gamma & \delta
\end{pmatrix}
\end{align*}
such that for all $h,h^{\prime}\in H$ and $k, k^{\prime}\in K$, the following conditions hold:
	\begin{itemize}
		\item[$(i)$] $\alpha(hh^{\prime}) = \alpha(h)(\gamma(h)\cdot \alpha(h^{\prime}))$,
		\item[$(ii)$] $\gamma(hh^{\prime}) = \gamma(h)^{\alpha(h^{\prime})}\gamma(h^{\prime})$,
		\item[$(iii)$] $\beta(kk^{\prime}) = \beta(k)(\delta(k)\cdot \beta(k^{\prime}))$,
		\item[$(iv)$] $\delta(kk^{\prime}) = \delta(k)^{\beta(k^{\prime})}\delta(k^{\prime})$,
		\item[$(v)$] $\beta(k)(\delta(k)\cdot \alpha(h)) = \alpha(k\cdot h)(\gamma(k\cdot h)\cdot \beta(k^{h}))$,
		\item[$(vi)$] $\delta(k)^{\alpha(h)}\gamma(h) = \gamma(k\cdot h)^{\beta(k^{h})}\delta(k^{h})$, 
	\end{itemize}
	 Then the set $\mathcal{M}$ forms a monoid with the binary operation defined as
	\[\begin{pmatrix}
	\alpha^{\prime} & \beta^{\prime}\\ \gamma^{\prime} & \delta^{\prime}
	\end{pmatrix}\begin{pmatrix}
	\alpha & \beta\\ \gamma & \delta
	\end{pmatrix} = \begin{pmatrix}
	\alpha^{\prime}\alpha + (\gamma^{\prime}\alpha \cdot \beta^{\prime}\gamma) & \alpha^{\prime}\beta + (\gamma^{\prime}\beta \cdot \beta^{\prime}\delta)\\ (\gamma^{\prime}\alpha)^{\beta^{\prime}\gamma} + \delta^{\prime}\gamma & (\gamma^{\prime}\beta)^{\beta^{\prime}\delta} + \delta^{\prime}\delta
	\end{pmatrix}  \]
	and the identity element as $\begin{pmatrix}
	1 & 0\\ 0 & 1
	\end{pmatrix}$, where $1$ denotes the identity morphism and $0$ denotes the trivial morphism.

	Let $G = H\bowtie K$. Let $\theta \in End(G)$, $\alpha\in Map(H, H)$, $\beta\in Map(K, H)$, $\gamma\in Map(H, K)$ and $\delta\in Map(K, K)$. Then we say that the maps $\alpha$, $\beta$, $\gamma$ and $\delta$ are associated with the endomorphism $\theta$, if $\theta$ is defined by $\theta(h) = \alpha(h)\gamma(h)$ and $\theta(k) = \beta(k)\delta(k)$ for all $h\in H$ and $k\in K$.
	
		\begin{lemma}\label{lem2.1}
		Let $G = H\bowtie K$ and let $\alpha$, $\beta$, $\gamma$ and let $\delta$ the maps associated with $\theta\in End(G)$. Then $\begin{pmatrix}
		\alpha & \beta \\ \gamma & \delta
		\end{pmatrix}\in \mathcal{M}$.
	\end{lemma}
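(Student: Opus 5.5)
The plan is to compute $\theta$ on suitable products in two ways and compare components. The comparison rests on uniqueness of factorization in $G = H\bowtie K$: every element is uniquely $hk$ with $h\in H$, $k\in K$, because $H\cap K=\{1\}$. In the external model we use the multiplication rule
\[
(h,k)(h',k') = \bigl(h(k\cdot h'),\, k^{h'}k'\bigr),
\]
which is the internal relation $kh' = (k\cdot h')k^{h'}$.

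For conditions $(i)$ and $(ii)$, take $h,h'\in H$. On one side, $\theta(hh') = \alpha(hh')\gamma(hh')$ by definition of the associated maps. On the other side, since $\theta$ is a homomorphism,
\[
\theta(hh') = \theta(h)\theta(h') = \alpha(h)\gamma(h)\alpha(h')\gamma(h').
\]
Rewriting the middle factor as $\gamma(h)\alpha(h') = (\gamma(h)\cdot\alpha(h'))\,\gamma(h)^{\alpha(h')}$ gives
\[
\theta(hh') = \bigl[\alpha(h)(\gamma(h)\cdot\alpha(h'))\bigr]\,\bigl[\gamma(h)^{\alpha(h')}\gamma(h')\bigr],
\]
with the first bracket in $H$ and the second in $K$. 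Comparing $H$-parts gives $(i)$, and comparing $K$-parts gives $(ii)$. Conditions $(iii)$ and $(iv)$ follow by the same computation on $\theta(kk') = \theta(k)\theta(k') = \beta(k)\delta(k)\beta(k')\delta(k')$.

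For conditions $(v)$ and $(vi)$, take $k\in K$ and $h\in H$ and use the relation $kh = (k\cdot h)k^{h}$ in $G$. Applying $\theta$ to the left-hand side and rewriting $\delta(k)\alpha(h) = (\delta(k)\cdot\alpha(h))\,\delta(k)^{\alpha(h)}$ gives
\[
\theta(kh) = \beta(k)\delta(k)\alpha(h)\gamma(h) = \bigl[\beta(k)(\delta(k)\cdot\alpha(h))\bigr]\,\bigl[\delta(k)^{\alpha(h)}\gamma(h)\bigr].
\]
Applying $\theta$ to the right-hand side and rewriting $\gamma(k\cdot h)\beta(k^{h})$ the same way gives
\[
\theta\bigl((k\cdot h)k^{h}\bigr) = \alpha(k\cdot h)\gamma(k\cdot h)\beta(k^{h})\delta(k^{h}) = \bigl[\alpha(k\cdot h)(\gamma(k\cdot h)\cdot\beta(k^{h}))\bigr]\,\bigl[\gamma(k\cdot h)^{\beta(k^{h})}\delta(k^{h})\bigr].
\]
Since $k\cdot h\in H$ and $k^{h}\in K$, each factor is evaluated by the correct associated map. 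Comparing $H$-parts of the two expressions gives $(v)$, and comparing $K$-parts gives $(vi)$.

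None of these steps is a real obstacle; the argument is routine. The only point needing care is to fix one convention for the mutual actions, the internal form $kh' = (k\cdot h')k^{h'}$ matching the external product above. Every reordering of a $K$-element past an $H$-element must then be done with that single rule, so that the $H$- and $K$-components compared at each step are exactly the two sides of the stated conditions.
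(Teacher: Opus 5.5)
Your proof is correct and is the same as the paper's. You expand $\theta(hh')$, $\theta(kk')$ and $\theta(kh)=\theta\bigl((k\cdot h)k^{h}\bigr)$ in terms of the associated maps, move each $K$-factor past an $H$-factor using $kh=(k\cdot h)k^{h}$, and compare components using uniqueness of factorization in $G=HK$. Incidentally, your external product rule, with $k^{h'}k'$ in the second coordinate, is the correct one; the paper's displayed formula has a typo there.
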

	\begin{proof}
		 Let $\theta \in End(G)$. Then for all $h\in H$ and $k\in K$, we define the maps $\alpha$, $\beta$, $\gamma$ and $\delta$ by means of $\theta(h) = \alpha(h)\gamma(h)$ and $\theta(k) = \beta(k)\delta(k)$. Now, for all $h,h^{\prime}\in H$, we have
		\begin{align*}
		\alpha(hh^{\prime})\gamma(hh^{\prime}) &= \theta(hh^{\prime})\\ &= \theta(h)\theta(h^{\prime})\\
		&= \alpha(h)\gamma(h)\alpha(h^{\prime})\gamma(h^{\prime})\\
		&= \alpha(h)(\gamma(h)\cdot \alpha(h^{\prime})) \gamma(h)^{\alpha(h^{\prime})}\gamma(h^{\prime}).
		\end{align*}
		Therefore, by the uniqueness of representation, we get $\alpha(hh^{\prime}) = \alpha(h)(\gamma(h)\cdot \alpha(h^{\prime}))$ and $\gamma(hh^{\prime}) = \gamma(h)^{\alpha(h^{\prime})}\gamma(h^{\prime})$. Using a similar argument, we get the conditions $(iii)$ and $(iv)$. Now, for all $k,k^{\prime}\in K$, we have
		\begin{align*}
		\beta(kk^{\prime})\delta(kk^{\prime}) &= \theta(kk^{\prime})\\
		&= \theta(k)\theta(k^{\prime})\\
		&= \beta(k)\delta(k)\beta(k^{\prime})\delta(k^{\prime})\\
		&= \beta(k)(\delta(k)\cdot \beta(k^{\prime}))\delta(k)^{\beta(k^{\prime})}\delta(k^{\prime}).
		\end{align*}
		Using the uniqueness of representation, we get $\beta(kk^{\prime}) = \beta(k)(\delta(k)\cdot \beta(k^{\prime}))$ and $\delta(kk^{\prime}) = \delta(k)^{\beta(k^{\prime})}\delta(k^{\prime})$.
		
		Now, for all $h\in H$ and $k\in K$, we have
		
		\begin{align*}
		\beta(k)(\delta(k)\cdot \alpha(h))\delta(k)^{\alpha(h)}\gamma(h) &= \beta(k)\delta(k)\alpha(h)\gamma(h)\\
		&= \theta(k)\theta(h)\\
		&= \theta(kh)\\
		&= \theta((k\cdot h)k^{h})\\
		&= \theta(k\cdot h)\theta(k^{h})\\
		&= \alpha(k\cdot h)\gamma(k\cdot h)\beta(k^{h})\delta(k^{h})\\
		&= \alpha(k\cdot h)(\gamma(k\cdot h)\cdot \beta(k^{h}))\gamma(k\cdot h)^{\beta(k^{h})}\delta(k^{h}).
		\end{align*}
		Again, using the uniqueness of representation, we get $\beta(k)(\delta(k)\cdot \alpha(h)) = \alpha(k\cdot h)(\gamma(k\cdot h)\cdot \beta(k^{h}))$ and $\delta(k)^{\alpha(h)}\gamma(h) = \gamma(k\cdot h)^{\beta(k^{h})}\delta(k^{h})$. Thus the maps $\alpha$, $\beta$, $\gamma$ and $\delta$ satisfy all the conditions $(i)-(vi)$. Therefore, $\begin{pmatrix}
		\alpha & \beta\\
		\gamma & \delta
		\end{pmatrix} \in \mathcal{M}$.
	\end{proof}

	Now, we prove the isomorphism between the monoids $End(H\bowtie K)$ and $\mathcal{M}$ in the following Theorem.

	\begin{theorem}\label{t1}
		Let $G = H\bowtie K$. Then $End(H\bowtie K) \simeq \mathcal{M}$ as monoids. 
	\end{theorem}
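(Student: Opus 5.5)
We define $\Psi : End(H\bowtie K)\to \mathcal{M}$ by $\Psi(\theta) = \begin{pmatrix}\alpha & \beta\\ \gamma & \delta\end{pmatrix}$, where $\alpha,\beta,\gamma,\delta$ are the maps associated with $\theta$. By Lemma \ref{lem2.1}, $\Psi$ lands in $\mathcal{M}$. It is well defined because every $g\in G$ has a unique factorization $g = hk$ with $h\in H$, $k\in K$. It is injective because $\theta$ is determined by its values on $H$ and on $K$: indeed $\theta(hk)=\theta(h)\theta(k)$, so the four maps recover $\theta$. Finally, $\Psi(I_G)$ is the identity matrix, since $I_G(h)=h\cdot 1$ and $I_G(k)=1\cdot k$.

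For surjectivity, we start from a matrix in $\mathcal{M}$ and define
\[\theta(hk) = \alpha(h)\gamma(h)\beta(k)\delta(k).\]
We must check that $\theta$ is a homomorphism. For this we compute $\theta(hk)\theta(h'k')$ and compare it with $\theta\big(h(k\cdot h')\,k^{h'}k'\big)$. The computation reduces to three facts.
\begin{itemize}
\item The restriction $\theta|_H$ is a homomorphism. This follows from conditions $(i)$ and $(ii)$ by reversing the computation in Lemma \ref{lem2.1}: $\alpha(h)\gamma(h)\alpha(h')\gamma(h') = \alpha(hh')\gamma(hh')$.
\item The restriction $\theta|_K$ is a homomorphism. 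This follows from conditions $(iii)$ and $(iv)$ in the same way.
\item The identity $\theta(k)\theta(h) = \theta(k\cdot h)\theta(k^h)$ holds. This follows from conditions $(v)$ and $(vi)$, again by reading the last computation of Lemma \ref{lem2.1} backwards.
\end{itemize}
With these three facts in hand,
\[\theta(hk)\theta(h'k') = \theta(h)\theta(k)\theta(h')\theta(k') = \theta(h)\theta(k\cdot h')\theta(k^{h'})\theta(k') = \theta\big(h(k\cdot h')\big)\,\theta\big(k^{h'}k'\big),\]
which is exactly $\theta\big((hk)(h'k')\big)$. Its associated maps are the given $\alpha,\beta,\gamma,\delta$, so $\Psi$ is a bijection.

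It remains to show that $\Psi$ turns composition into the given matrix product. Let $\theta'$ and $\theta$ have matrices $\begin{pmatrix}\alpha' & \beta'\\ \gamma' & \delta'\end{pmatrix}$ and $\begin{pmatrix}\alpha & \beta\\ \gamma & \delta\end{pmatrix}$. Then
\[\theta'\theta(h) = \theta'(\alpha(h))\,\theta'(\gamma(h)) = \alpha'\alpha(h)\,\gamma'\alpha(h)\,\beta'\gamma(h)\,\delta'\gamma(h).\]
We rewrite the middle factor $\gamma'\alpha(h)\,\beta'\gamma(h)\in KH$ as $\big(\gamma'\alpha(h)\cdot\beta'\gamma(h)\big)\big(\gamma'\alpha(h)\big)^{\beta'\gamma(h)}$. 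This gives an $H$-part $\alpha'\alpha + (\gamma'\alpha\cdot\beta'\gamma)$ and a $K$-part $(\gamma'\alpha)^{\beta'\gamma}+\delta'\gamma$, which is the first column of the product. The same computation applied to $\theta'\theta(k)$ gives the second column.

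The main obstacle is the surjectivity step, specifically verifying that the reconstructed $\theta$ is a homomorphism. The key is to organize the check into the three facts above and to apply the mixed relation $(v)$–$(vi)$ only to the single interior $kh'$ swap. Once that is done, associativity of $G$ handles the rest.
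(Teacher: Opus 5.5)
Your proof is correct and follows essentially the paper's route: the same map via associated maps, Lemma~\ref{lem2.1} to land in $\mathcal{M}$, injectivity from $\theta(hk)=\theta(h)\theta(k)$, and surjectivity by reconstructing $\theta(hk)=\alpha(h)\gamma(h)\beta(k)\delta(k)$ and checking multiplicativity via $(i)$--$(vi)$. Your three restriction facts, and your evaluation of $\theta'\theta$ separately on $H$ and on $K$ rather than on $hk$, are a tidier organization of the paper's long computations. The one tacit step, equally tacit in the paper, is the normalization $\alpha(1)=\beta(1)=1$ and $\gamma(1)=\delta(1)=1$; it follows from $(i)$--$(iv)$ using $1^{h}=1$ and injectivity of $k\mapsto k^{h}$, and you need it whenever you write $\theta(hk)=\theta(h)\theta(k)$ for the reconstructed $\theta$.
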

	\begin{proof}
	 Let $\theta \in End(G)$. Then for all $h\in H$ and $k\in K$, we define the maps $\alpha$, $\beta$, $\gamma$ and $\delta$ by means of $\theta(h) = \alpha(h)\gamma(h)$ and $\theta(k) = \beta(k)\delta(k)$. Now, using Lemma \ref{lem2.1}, $\begin{pmatrix}
	 \alpha & \beta \\ \gamma & \delta
	 \end{pmatrix} \in \mathcal{M}$.

 	This shows that to every $\theta \in End(G)$ we can associate an element $\begin{pmatrix}
	\alpha & \beta\\
	\gamma & \delta
	\end{pmatrix} \in \mathcal{M}$. This defines a map $T: End(G)\longrightarrow \mathcal{M}$ given by $\theta \longmapsto \begin{pmatrix}
	\alpha & \beta\\
	\gamma & \delta
	\end{pmatrix}$. 
	
	Now, we prove that the map $T$ is a monoid homomorphism. Let $\theta, \theta^{\prime}\in End(G)$. Let $\alpha$, $\beta$, $\gamma$ and $\delta$ be the maps associated with $\theta$ and $\alpha^{\prime}$, $\beta^{\prime}$, $\gamma^{\prime}$ and $\delta^{\prime}$ be the maps associated with $\theta^{\prime}$. Then for all $hk\in G$, we have
	\begin{align*}
	\theta^{\prime}\theta(hk) &= \theta^{\prime}(\alpha(h)\gamma(h)\beta(k)\delta(k))\\
	&= \theta^{\prime}(\alpha(h)(\gamma(h)\cdot \beta(k))\gamma(h)^{\beta(k)}\delta(k))\\
	&= \alpha^{\prime}(\alpha(h)(\gamma(h)\cdot \beta(k))\gamma^{\prime}(\alpha(h)(\gamma(h)\cdot \beta(k)))\beta^{\prime}(\gamma(h)^{\beta(k)}\delta(k))\delta^{\prime}(\gamma(h)^{\beta(k)}\delta(k))\\
	&= \alpha^{\prime}(\alpha(h))(\gamma^{\prime}(\alpha(h))\cdot \alpha^{\prime}(\gamma(h)\cdot \beta(k)))\gamma^{\prime}(\alpha(h))^{\alpha^{\prime}(\gamma(h)\cdot \beta(k))}\gamma^{\prime}(\gamma(h)\cdot \beta(k))\\
	&~~~~\beta^{\prime}(\gamma(h)^{\beta(k)})(\delta^{\prime}(\gamma(h)^{\beta(k)})\cdot \beta^{\prime}(\delta(k))){\delta^{\prime}(\gamma(h)^{\beta(k)})}^{ \beta^{\prime}(\delta(k))}\delta^{\prime}\delta(k)\\
	&= \alpha^{\prime}\alpha(h)\gamma^{\prime}\alpha(h)\alpha^{\prime}(\gamma(h)\cdot \beta(k))\gamma^{\prime}(\gamma(h)\cdot \beta(k))\beta^{\prime}(\gamma(h)^{\beta(k)})\delta^{\prime}(\gamma(h)^{\beta(k)})\beta^{\prime}\delta(k)\delta^{\prime}\delta(k)\\
	&= \alpha^{\prime}\alpha(h)\gamma^{\prime}\alpha(h)(\alpha^{\prime}(\gamma(h)\cdot \beta(k))(\gamma^{\prime}(\gamma(h)\cdot \beta(k))\cdot \beta^{\prime}(\gamma(h)^{\beta(k)})))\\
	&~~~~({\gamma^{\prime}(\gamma(h)\cdot \beta(k))}^ {\beta^{\prime}(\gamma(h)^{\beta(k)})})\delta^{\prime}(\gamma(h)^{\beta(k)})\beta^{\prime}\delta(k)\delta^{\prime}\delta(k)\\
	&= \alpha^{\prime}\alpha(h)\gamma^{\prime}\alpha(h)\beta^{\prime}(\gamma(h))(\delta^{\prime}(\gamma(h))\cdot \alpha^{\prime}(\beta(k)))\delta^{\prime}(\gamma(h))^{\alpha^{\prime}(\beta(k))}\gamma^{\prime}\beta(k)\beta^{\prime}\delta(k)\delta^{\prime}\delta(k)\\
&=	\alpha^{\prime}\alpha(h)\gamma^{\prime}\alpha(h)\beta^{\prime}\gamma(h)\delta^{\prime}\gamma(h)\alpha^{\prime}\beta(k)\gamma^{\prime}\beta(k)\beta^{\prime}\delta(k)\delta^{\prime}\delta(k)\\
&= \alpha^{\prime}\alpha(h)(\gamma^{\prime}\alpha(h)\cdot \beta^{\prime}\gamma(h))\gamma^{\prime}\alpha(h)^{\beta^{\prime}\gamma(h)}\delta^{\prime}\gamma(h)\alpha^{\prime}\beta(k)(\gamma^{\prime}\beta(k)\cdot \beta^{\prime}\delta(k))\gamma^{\prime}\beta(k)^{\beta^{\prime}\delta(k)}\delta^{\prime}\delta(k)\\
&= (\alpha^{\prime}\alpha + (\gamma^{\prime}\alpha \cdot \beta^{\prime}\gamma))(h)({\gamma^{\prime}\alpha}^{\beta^{\prime}\gamma} + \delta^{\prime}\gamma)(h)(\alpha^{\prime}\beta + (\gamma^{\prime}\beta \cdot \beta^{\prime}\delta))(k)({\gamma^{\prime}\beta}^{\beta^{\prime}\delta}+ \delta^{\prime}\delta)(k)
	\end{align*}
	Thus, the maps associated with $\theta^{\prime}\theta\in End(G)$ are $a = \alpha^{\prime}\alpha + (\gamma^{\prime}\alpha \cdot \beta^{\prime}\gamma)$, $b= {\gamma^{\prime}\alpha}^{\beta^{\prime}\gamma} + \delta^{\prime}\gamma$, $c = \alpha^{\prime}\beta + (\gamma^{\prime}\beta \cdot \beta^{\prime}\delta)$ and $d = {\gamma^{\prime}\beta}^{\beta^{\prime}\delta}+ \delta^{\prime}\delta$. Therefore, using Lemma \ref{lem2.1}, we get $\begin{pmatrix}
	a& b \\ c & d
	\end{pmatrix}\in \mathcal{M}$ and so
	
	\begin{align*}
	T(\theta^{\prime}\theta) &= \begin{pmatrix}
	\alpha^{\prime}\alpha + (\gamma^{\prime}\alpha \cdot \beta^{\prime}\gamma) & {\gamma^{\prime}\alpha}^{\beta^{\prime}\gamma} +  \delta^{\prime}\gamma\\ \alpha^{\prime}\beta + (\gamma^{\prime}\beta \cdot \beta^{\prime}\delta) & {\gamma^{\prime}\beta}^{\beta^{\prime}\delta}+ \delta^{\prime}\delta
	\end{pmatrix}\\
	&= \begin{pmatrix}
	\alpha^{\prime} & \beta^{\prime}\\ \gamma^{\prime} & \delta^{\prime}
	\end{pmatrix}\begin{pmatrix}
	\alpha & \beta\\ \gamma & \delta
	\end{pmatrix}\\
	&= T(\theta^{\prime})T(\theta).
	\end{align*}
	This shows that the map $T$ is a monoid homomorphism. Now, we prove that the map $T$ is a bijection.
	
	Let $M = \begin{pmatrix}
	\alpha & \beta \\ \gamma & \delta
	\end{pmatrix}\in \mathcal{M}$, where the maps $\alpha$, $\beta$, $\gamma$ and $\delta$ satisfy  the conditions $(i)-(vi)$. Then we define a map $\theta : G \longrightarrow G$ by $\theta(hk) = \alpha(h)\gamma(h)\beta(k)\delta(k)$ for all $h\in H$ and $k\in K$. Let $h, h^{\prime} \in H$ and $k, k^{\prime}\in K$. Then we have
	\begin{align*}
	\theta((h^{\prime}k^{\prime})(hk)) &= \theta(h^{\prime}(k^{\prime}\cdot h){k^{\prime}}^{h}k)\\
	&= \alpha(h^{\prime}(k^{\prime}\cdot h))\gamma(h^{\prime}(k^{\prime}\cdot h))\beta({k^{\prime}}^{h}k)\delta({k^{\prime}}^{h}k)\\
	&= \alpha(h^{\prime})(\gamma(h^{\prime})\cdot \alpha(k^{\prime}\cdot h))\gamma(h^{\prime})^{\alpha(k^{\prime}\cdot h)}\gamma(k^{\prime}\cdot h)\beta({k^{\prime}}^{h})(\delta({k^{\prime}}^{h})\cdot \beta(k))\delta({k^{\prime}}^{h})^{\beta(k)}\delta(k)\\
	&= \alpha(h^{\prime})\gamma(h^{\prime})\alpha(k^{\prime}\cdot h)\gamma(k^{\prime}\cdot h)\beta({k^{\prime}}^{h})\delta({k^{\prime}}^{h})\beta(k)\delta(k)\\
	&= \alpha(h^{\prime})\gamma(h^{\prime})\alpha({k^{\prime}}\cdot h)(\gamma(k^{\prime}\cdot h)\cdot \beta({k^{\prime}}^{h}))\gamma(k^{\prime} \cdot h)^{\beta({k^{\prime}}^{h})}\delta({k^{\prime}}^{h})\beta(k)\delta(k)\\
	&=\alpha(h^{\prime})\gamma(h^{\prime})\beta(k^{\prime})(\delta(k^{\prime})\cdot \alpha(h))\delta(k^{\prime})^{\alpha(h)}\gamma(h)\beta(k)\delta(k)\\
	&= \alpha(h^{\prime})\gamma(h^{\prime})\beta(k^{\prime})\delta(k^{\prime})\alpha(h)\gamma(h)\beta(k)\delta(k)\\
	&=\theta(h^{\prime}k^{\prime})\theta(hk).
		\end{align*}
	Thus $\theta \in End(G)$ such that $T(\theta) = M$. This proves that the map $T$ is a surjection. Now, let $\theta, \theta^{\prime} \in End(G)$ such that $T(\theta) = T(\theta^{\prime}) = \begin{pmatrix}
	\alpha & \beta \\ \gamma & \delta
	\end{pmatrix}\in \mathcal{M}$. Then it is clear that $\theta(hk) = \alpha(h)\gamma(h)\beta(k)\delta(k) = \theta^{\prime}(hk)$ for all $h\in H$ and $k\in K$. Thus $\theta = \theta^{\prime}$. This shows that the map $T$ is injective and so, it is a bijection. Hence the map $T$ is an isomorphism of monoids. 
	\end{proof}

	From here onwards, we will identify the endomorphisms of the semidirect product of groups $H$ and $K$ with the elements of $\mathcal{M}$.
	\section{Determinants}\label{s3}
	In this section, we will define the determinants associated with an endomorphisms of Zappa-Sz\'{e}p product of two groups in the same way as it is defined for the endomorphisms of the semidirect product of two groups (see \cite{detsem}). Let $H$ and $K$ be two groups and $\theta = \begin{pmatrix}
	\alpha & \beta\\ \gamma & \delta
	\end{pmatrix}\in \mathcal{M}$. If $\alpha$ is invertible, then we define $\det_{K}(\theta) = -\gamma\alpha^{-1}\beta + \delta$ to be the $K-$determinant of $\theta$ and if $\delta$ is invertible, then  $\det_{H}(\theta) = \alpha - \beta\delta^{-1} \gamma$ is defined to be the $H-$determinant of $\theta$. In general, $\det_{H}(\theta)$ or $\det_{K}(\theta)$ need neither be a group homomorphism nor a bijection as shown in the following examples. 
	
\begin{example}
	Let $H = \langle x, z \mid z^{3} = 1, xzx^{-1} = z^{-1}\rangle \simeq \mathbb{Z}_{3}\rtimes \mathbb{Z}$ and $K = \langle y \rangle \simeq \mathbb{Z}$. Let the mutual actions $\sigma$ and $\tau$ are defined by 
	\begin{center}
		$\sigma(y,x) = \sigma_{y}(x) = y\cdot x = x$, $\sigma(y, z) = \sigma_{y}(z) = y\cdot z = z^{-1}$, $\tau(y, x) = \tau_{x}(y) = x^{y} = y$ and $\tau(y, z) = \tau_{z}(y) = z^{y} = y$.
	\end{center}
 Thus $G = H\bowtie K$ is a group with the presentation as below
\[G = \langle x, y, z \mid z^{3} = 1, yx = xy, xzx^{-1} = z^{-1}, yz = z^{-1}y\rangle \simeq (\mathbb{Z}_{3}\rtimes \mathbb{Z})\bowtie \mathbb{Z}.\]
Now, define the maps $\alpha: H\longrightarrow H$, $\beta: K \longrightarrow H$, $\gamma: H \longrightarrow K$ and $\delta: K \longrightarrow K$ as below
\begin{align*}
\alpha(x^{i}) = x^{3i} \; \text{and}\;& \alpha(z^{j}) = z^{j}, \delta = I_{K}, \gamma(x) = y \; \text{and}\; \gamma(z) = 1\\
\text{and}\;\;&\beta(y^{l}) = \left\{\begin{array}{ll}
z, \;\; &\text{if $l$ is odd}\\
1, \;\; &\text{if $l$ is even}
\end{array} \right.
\end{align*}
for all integers $i,l$ and $0\le j\le 2$. One can easily check that the maps $\alpha$, $\beta$, $\gamma$ and $\delta$ satisfy the conditions $(i) - (iv)$. Now, for any $i, j$ we have
\begin{align*}
\alpha\left({y^{j}}\cdot x^{i}\right)\left(\gamma\left({y^{j}}\cdot x^{i}\right)\cdot \beta\left({(x^{i})}^{y^{j}}\right)\right) &= \alpha\left(x^{i}\right)\left(\gamma\left(x^{i}\right)\cdot \beta\left(y^{j}\right)\right) \\
&= x^{3i}\left(y^{i}\cdot \beta\left(y^{j}\right)\right) \\
&= \left\{\begin{array}{ll}
x^{3i}\left(y^{i}\cdot z\right), &\; \text{if $j$ is odd}\\
x^{3i}\left(y^{i}\cdot 1\right), &\; \text{if $j$ is even}
\end{array}  \right.\\
&= \left\{\begin{array}{ll}
x^{3i}z^{-1}, &\; \text{if $j$ is odd and $i$ is odd}\\
x^{3i}z, &\; \text{if $j$ is odd and $i$ is even}\\
x^{3i}, &\; \text{if $j$ is even}
\end{array}  \right..
\end{align*}
and \begin{align*}
\beta(y^{j})\left(\delta(y^{j}) \cdot\alpha(x^{i})\right) &= \left\{\begin{array}{ll}
z\left(y^{j}\cdot x^{3i}\right), &\; \text{if $j$ is odd}\\
y^{j}\cdot x^{3i}, &\; \text{if $j$ is even}
\end{array}  \right.\\
&= \left\{\begin{array}{ll}
x^{3i}z^{-1}, &\; \text{if $j$ is odd and $i$ is odd}\\
x^{3i}z, &\; \text{if $j$ is odd and $i$ is even}\\
x^{3i}, &\; \text{if $j$ is even}
\end{array}  \right..
\end{align*}
Also, 
\begin{align*}
\alpha\left({y^{j}}\cdot z^{i}\right)\left(\gamma\left({y^{j}}\cdot z^{i}\right)\cdot \beta\left({(z^{i})}^{y^{j}}\right)\right)
&= \alpha\left({y^{j}}\cdot z^{i}\right)\beta\left(y^{j}\right) \\
&= \left\{\begin{array}{ll}
\alpha(z^{-i})z, &\; \text{if $j$ is odd}\\
\alpha(z^{i}), &\; \text{if $j$ is even}
\end{array}  \right.\\
&= \left\{\begin{array}{ll}
z^{-i+1}, &\; \text{if $j$ is odd}\\
z^{i}, &\; \text{if $j$ is even}
\end{array}  \right..
\end{align*}
and \begin{align*}
\beta(y^{j})\left(\delta(y^{j}) \cdot\alpha(z^{i})\right) &= \beta(y^{j})\left(y^{j} \cdot z^{i}\right)\\
&= \left\{\begin{array}{ll}
z^{-i+1}, &\; \text{if $j$ is odd}\\
z^{i}, &\; \text{if $j$ is even}
\end{array}  \right..
\end{align*}
Further, since $K$ is abelian and $\tau_{h} = I_{K}$ for all $h\in H$, one can easily observe that $(vi)$ holds. Thus the maps $\alpha$, $\beta$, $\gamma$ and $\delta$ satisfy all the conditions $(i)-(vi)$. Therefore, $\begin{pmatrix}
\alpha & \beta\\ \gamma & \delta
\end{pmatrix}\in End(G)$.

Since the map $\delta$ is invertible, the $H$-determinant is defined and is given as
\begin{align*}
(\alpha-\beta{\delta}^{-1}\gamma)(z^{r}) &= z^{r}\\
\text{and}\;\;
(\alpha-\beta{\delta}^{-1}\gamma)(x^{i}) &= \left\{\begin{array}{ll}
x^{3i}z^{-1}, & \; \text{if $i$ is odd}\\
x^{3i}, & \; \text{if $i$ is even}
\end{array}\right..
\end{align*} 
It is evident that the determinant $\alpha-\beta{\delta}^{-1}\gamma$ is neither a group homomorphism nor invertible.
\end{example}
Let $G = H\bowtie K$. In the following theorem, we will prove that for any $\theta\in End(G)$ if either of the determinants $\det_{H}(\theta)$ or $\det_{K}(\theta)$ is defined and invertible, then the map $\theta$ is also invertible. 
	\begin{theorem}\label{t31}
		Let $G = H\bowtie K$ and let $\theta = \begin{pmatrix}
		\alpha & \beta\\ \gamma & \delta
		\end{pmatrix}$ be an endomorphism of $G$. Then the following hold
		\begin{itemize}
			\item[$(i)$] if $\alpha$ is invertible and $\Delta_{K} = \det_{K}(\theta)$ is invertible, then $\theta$ is invertible and 
			\[\theta^{-1} = \begin{pmatrix}
			\alpha^{-1}-\alpha^{-1}\beta{\Delta_{K}}^{-1}(-\gamma\alpha^{-1}) & -\alpha^{-1}\beta{\Delta_{K}}^{-1}\\
			{\Delta_{K}}^{-1}(-\gamma\alpha^{-1}) & {\Delta_{K}}^{-1}
			\end{pmatrix}. \] 
			Moreover, $\det_{H}(\theta^{-1}) = \alpha^{-1}$.
			\item[$(ii)$] if $\delta$ is invertible and $\Delta_{H} = \det_{H}(\theta)$ is invertible, then $\theta$ is invertible and 
			\[\theta^{-1} = \begin{pmatrix}
			{\Delta_{H}}^{-1} & {\Delta_{H}}^{-1}(-\beta\delta^{-1})\\
			-\delta^{-1}\gamma{\Delta_{H}}^{-1}& -\delta^{-1}\gamma{\Delta_{H}}^{-1}(-\beta\delta^{-1}) + \delta^{-1}
			\end{pmatrix}. \] 
			Moreover, $\det_{K}(\theta^{-1}) = \delta^{-1}$. 
		\end{itemize}
	\end{theorem}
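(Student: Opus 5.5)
The plan is to prove bijectivity of $\theta$ at the level of elements of $G$, using Theorem \ref{t1}, and only then read off the matrix of $\theta^{-1}$. Working elementwise avoids manipulating matrices of maps formally: $+$ is not commutative, and composition distributes only on the right, $(f+g)h=fh+gh$, and not on the left unless the left factor is a homomorphism. I treat $(i)$ in detail; $(ii)$ is symmetric, with the roles of $H,\alpha$ and $K,\delta$ exchanged and the other product order in the conditions $(i)$--$(vi)$.

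\emph{Injectivity.} Suppose $\theta(hk)=1$. Then $\theta(h)=\theta(k)^{-1}=\theta(k^{-1})$, that is, $\alpha(h)\gamma(h)=\beta(k^{-1})\delta(k^{-1})$. Uniqueness of the $HK$-decomposition gives $\alpha(h)=\beta(k^{-1})$ and $\gamma(h)=\delta(k^{-1})$. Since $\alpha$ is invertible, $h=\alpha^{-1}\beta(k^{-1})$. Substituting, $\gamma\alpha^{-1}\beta(k^{-1})=\delta(k^{-1})$, which says exactly $\Delta_K(k^{-1})=1$. Conditions $(i)$--$(iv)$ with $h=h'=1$ give $\alpha(1)=\beta(1)=1$ and $\gamma(1)=\delta(1)=1$, so $\Delta_K(1)=1$. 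Injectivity of $\Delta_K$ then forces $k=1$, and hence $h=\alpha^{-1}\beta(1)=1$.

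\emph{Surjectivity and the formula.} Let $\psi$ be the map that the displayed matrix determines via Theorem \ref{t1}: $\psi(h)=a(h)c(h)$ and $\psi(k)=b(k)d(k)$, where $a,b,c,d$ are the four entries (top-left, top-right, bottom-left, bottom-right). Since $\theta$ is a homomorphism, it suffices to verify the pointwise identities $\theta(\psi(h))=h$ and $\theta(\psi(k))=k$; then $\theta(\psi(h)\psi(k))=hk$, so $\theta$ is surjective. Being bijective, $\theta$ is an automorphism, $\theta^{-1}$ agrees with $\psi$ on $H$ and on $K$, and so $T(\theta^{-1})$ is the displayed matrix.

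For $k\in K$, put $m=\Delta_K^{-1}(k)$ and $x=\alpha^{-1}\beta(m)$, so that $\psi(k)=x^{-1}m$. One computes $\theta(x^{-1}m)=\theta(x)^{-1}\theta(m)=\gamma(x)^{-1}\alpha(x)^{-1}\beta(m)\delta(m)=\gamma\alpha^{-1}\beta(m)^{-1}\delta(m)=\Delta_K(m)=k$. The analogous computation for $h\in H$ uses $y=\alpha^{-1}(h)$ and $n=\Delta_K^{-1}(\gamma(y)^{-1})$. This verification is the step I expect to be the main obstacle. The difficulty is that $\alpha^{-1}(u)^{-1}$ need not equal $\alpha^{-1}(u^{-1})$, so the entries must be interpreted exactly as the pointwise formulas in the statement. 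Each identity has to be reduced, using condition $(i)$ (which gives $\alpha(y^{-1})=\gamma(y)^{-1}\cdot\alpha(y)^{-1}$) and its analogue $(ii)$ for $\gamma$, to the uniqueness of the $HK$-decomposition. If the literal entries do not match, I would instead define the entries of $\theta^{-1}$ by $\theta^{-1}|_H$ and $\theta^{-1}|_K$ and show that they coincide pointwise with the displayed expressions.

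\emph{The ``moreover'' part.} The bottom-right entry of $\theta^{-1}$ is $\Delta_K^{-1}$, whose inverse is $\Delta_K$, so $\det_H(\theta^{-1})$ is defined. It equals $a-b\,\Delta_K\,c$, where $b\,\Delta_K\,c=-\alpha^{-1}\beta\Delta_K^{-1}\Delta_K\Delta_K^{-1}(-\gamma\alpha^{-1})=-\alpha^{-1}\beta\Delta_K^{-1}(-\gamma\alpha^{-1})$. Here only right composition and the pointwise identity $-(-f)=f$ are used. Since $a=\alpha^{-1}+\bigl(-\alpha^{-1}\beta\Delta_K^{-1}(-\gamma\alpha^{-1})\bigr)$, subtracting this term pointwise cancels it and leaves $\alpha^{-1}$. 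The same cancellation gives $\det_K(\theta^{-1})=\delta^{-1}$ in $(ii)$.
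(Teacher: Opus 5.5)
Your proof is correct, and it takes a different route from the paper's. The paper stays inside the matrix calculus. It names the four displayed entries $\alpha^{\prime},\beta^{\prime},\gamma^{\prime},\delta^{\prime}$ and checks, by formal manipulation, that the product $\theta\theta^{\prime}$ is the identity matrix. The tools are condition $(i)$, right distributivity, and the identity $\Delta_K\Delta_K^{-1}=1$. It then declares $\theta^{\prime}=\theta^{-1}$.

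That matrix computation is exactly your pointwise check $\theta(\psi(h))=h$, $\theta(\psi(k))=k$. So it only exhibits a right inverse, i.e.\ surjectivity of $\theta$. The paper never verifies $\theta^{\prime}\theta=1$ or injectivity, and this matters for infinite $G$, where a surjective endomorphism need not be injective. Your separate injectivity step supplies precisely this missing half: $\theta(hk)=1$ forces $\alpha(h)=\beta(k^{-1})$ and $\gamma(h)=\delta(k^{-1})$, hence $\Delta_K(k^{-1})=1=\Delta_K(1)$. It is essentially the converse of the elementwise argument the paper itself uses later in Theorem~\ref{t32}. Working directly with $\theta$ as a homomorphism, e.g.\ $\theta(w^{-1})=\theta(w)^{-1}$, also replaces the paper's repeated appeals to conditions $(i)$--$(vi)$. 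The paper's route keeps the proof in the formal algebra of $\mathcal{M}$; yours is shorter and complete. Your treatment of the ``moreover'' part is the same cancellation the paper uses.

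The step you flagged as the main obstacle is immediate, and your fallback is unnecessary. Put $y=\alpha^{-1}(h)$, $n=\Delta_K^{-1}(\gamma(y)^{-1})$ and $w=\alpha^{-1}\beta(n)$. The literal entries give $a(h)=y\,w^{-1}$ and $c(h)=n$, so $\psi(h)=y\,w^{-1}n$. Your own $K$-computation with $m=n$ shows $\theta(w^{-1}n)=\Delta_K(n)=\gamma(y)^{-1}$. Hence $\theta(\psi(h))=\alpha(y)\gamma(y)\gamma(y)^{-1}=\alpha(y)=h$. No inverse ever has to pass through $\alpha^{-1}$, and condition $(i)$ is not needed.

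One minor point: $\alpha(1)=\gamma(1)=\beta(1)=\delta(1)=1$ follows most directly from $\theta(1)=1$ and uniqueness of the $HK$-decomposition. Condition $(i)$ with $h=h^{\prime}=1$ only gives $\gamma(1)\cdot\alpha(1)=1$, which needs one more line (bijectivity of $\sigma_{\gamma(1)}$) to conclude $\alpha(1)=1$.
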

	\begin{proof}
		We will only prove the part $(i)$, as the proof of the part $(ii)$ will be on the same lines.
		
		Let $\alpha$ and $\Delta = \Delta_{K}$ be invertible. Then $\Delta\Delta^{-1} = 1 = \Delta^{-1}\Delta$. This gives us
		\begin{equation}\label{e1}
		\gamma\alpha^{-1}\beta\Delta^{-1} + 1 = \delta\Delta^{-1}\; \text{and}\; \Delta^{-1}(-\gamma\alpha^{-1}\beta + \delta) = 1.
		\end{equation}
		Now, let $\alpha^{\prime} = \alpha^{-1}-\alpha^{-1}\beta{\Delta}^{-1}(-\gamma\alpha^{-1})$, $\beta^{\prime} = -\alpha^{-1}\beta{\Delta}^{-1}$, $\gamma^{\prime} = {\Delta}^{-1}(-\gamma\alpha^{-1})$ and $\delta^{\prime} = {\Delta}^{-1}$. Then we have
		
		\begin{align*}
		\alpha\alpha^{\prime} + (\beta\gamma^{\prime}\cdot \gamma\alpha^{\prime}) &=  \alpha(\alpha^{-1}-\alpha^{-1}\beta{\Delta}^{-1}(-\gamma\alpha^{-1})) + (\gamma(\alpha^{-1}-\alpha^{-1}\beta{\Delta}^{-1}(-\gamma\alpha^{-1})
		)\cdot \beta{\Delta}^{-1}(-\gamma\alpha^{-1}))\\
		&= \alpha(\alpha^{-1}-\alpha^{-1}\beta{\Delta}^{-1}(-\gamma\alpha^{-1})) + (\gamma(\alpha^{-1}-\alpha^{-1}\beta{\Delta}^{-1}(-\gamma\alpha^{-1})
		)\cdot \alpha(\alpha^{-1}\beta{\Delta}^{-1}(-\gamma\alpha^{-1})))\\
		&=  \alpha(\alpha^{-1}-\alpha^{-1}\beta{\Delta}^{-1}(-\gamma\alpha^{-1}) + \alpha^{-1}\beta{\Delta}^{-1}(-\gamma\alpha^{-1})) \\
		&= \alpha(\alpha^{-1})\\
		&= 1.\\
		\alpha\beta^{\prime} + (\gamma\beta^{\prime}\cdot \beta\delta^{\prime}) \\
		&= \alpha(-\alpha^{-1}\beta{\Delta}^{-1}) + (\gamma(-\alpha^{-1}\beta{\Delta}^{-1})\cdot \beta{\Delta}^{-1}) \\
		&= \alpha(-\alpha^{-1}\beta{\Delta}^{-1}) + (\gamma(-\alpha^{-1}\beta{\Delta}^{-1})\cdot \alpha(\alpha^{-1}\beta{\Delta}^{-1})) \\
		&= \alpha(-\alpha^{-1}\beta{\Delta}^{-1} + \alpha^{-1}\beta{\Delta}^{-1})\\
		&= 0.\\
		{\gamma\alpha^{\prime}}^{\beta\gamma^{\prime}} + \delta\gamma^{\prime} &= \gamma(\alpha^{-1}-\alpha^{-1}\beta{\Delta}^{-1}(-\gamma\alpha^{-1}))^{\beta\Delta^{-1}(-\gamma\alpha^{-1})} + \delta({\Delta}^{-1}(-\gamma\alpha^{-1}))\\
		&= ((\gamma\alpha^{-1})^{\alpha(-\alpha^{-1}\beta\Delta^{-1}(-\gamma\alpha^{-1}))}+ \gamma(-\alpha^{-1}\beta\Delta^{-1}(-\gamma\alpha^{-1})))^{\beta\Delta^{-1}(-\gamma\alpha^{-1})}\\
		&~~~~ + \delta({\Delta}^{-1}(-\gamma\alpha^{-1}))\\
		&= ((\gamma\alpha^{-1})^{\alpha(-\alpha^{-1}\beta\Delta^{-1}(-\gamma\alpha^{-1}))})^{(\gamma(-\alpha^{-1}\beta\Delta^{-1}(-\gamma\alpha^{-1}))\cdot \beta\Delta^{-1}(-\gamma\alpha^{-1}))}\\
		&~~~~ + (\gamma(-\alpha^{-1}\beta\Delta^{-1}(-\gamma\alpha^{-1})))^{\beta\Delta^{-1}(-\gamma\alpha^{-1})} + \delta({\Delta}^{-1}(-\gamma\alpha^{-1}))\\
		&= ((\gamma\alpha^{-1})^{\alpha(-\alpha^{-1}\beta\Delta^{-1}(-\gamma\alpha^{-1}))(\gamma(-\alpha^{-1}\beta\Delta^{-1}(-\gamma\alpha^{-1}))\cdot \alpha(\alpha^{-1}\beta\Delta^{-1}(-\gamma\alpha^{-1})))}\\
		&~~~~ + (\gamma(-\alpha^{-1}\beta\Delta^{-1}(-\gamma\alpha^{-1})))^{\alpha(\alpha^{-1}\beta\Delta^{-1}(-\gamma\alpha^{-1}))} + \delta({\Delta}^{-1}(-\gamma\alpha^{-1}))\\
		&= \gamma\alpha^{-1} - \gamma\alpha^{-1}\beta\Delta^{-1}(-\gamma\alpha^{-1})) + \delta{\Delta}^{-1}(-\gamma\alpha^{-1})\\
		&= \gamma\alpha^{-1} + (1-\delta\Delta^{-1})(-\gamma\alpha^{-1}) + \delta{\Delta}^{-1}(-\gamma\alpha^{-1})\\
		&= \gamma\alpha^{-1} -\gamma\alpha^{-1} -\delta\Delta^{-1}(-\gamma\alpha^{-1}) + \delta{\Delta}^{-1}(-\gamma\alpha^{-1})\\
		&= 0.\\ 
		{\gamma\beta^{\prime}}^{\beta\delta^{\prime}} + \delta\delta^{\prime} &= \gamma(-\alpha^{-1}\beta{\Delta}^{-1})^{\beta\Delta^{-1}} + \delta{\Delta}^{-1}\\
		&= \gamma(-\alpha^{-1}\beta{\Delta}^{-1})^{\alpha(\alpha^{-1}\beta\Delta^{-1})} + \delta{\Delta}^{-1}\\
		&= -\gamma\alpha^{-1}\beta{\Delta}^{-1}+ \delta{\Delta}^{-1}\\
		&= (-\gamma\alpha^{-1}\beta + \delta){\Delta}^{-1}\\
		&= \Delta{\Delta}^{-1}, \;(\text{using Equation (\ref{e1})})\\
		&= 1.
		\end{align*}
		Thus $\theta^{-1} = \begin{pmatrix}
		\alpha^{-1}-\alpha^{-1}\beta{\Delta_{K}}^{-1}(-\gamma\alpha^{-1}) & -\alpha^{-1}\beta{\Delta_{K}}^{-1}\\
		{\Delta_{K}}^{-1}(-\gamma\alpha^{-1}) & {\Delta_{K}}^{-1}
		\end{pmatrix}$ and $\theta^{-1}\in End(G)$. Moreover, $\delta^{\prime} = \Delta^{-1}$ is invertible. Therefore, $\det_{H}(\theta^{-1})$ is defined and $\det_{H}(\theta^{-1}) = \alpha^{\prime}- \beta^{\prime}{\delta^{\prime}}^{-1}\gamma^{\prime} = \alpha^{-1}-\alpha^{-1}\beta{\Delta}^{-1}(-\gamma\alpha^{-1}) - (-\alpha^{-1}\beta{\Delta}^{-1})\Delta({\Delta}^{-1}(-\gamma\alpha^{-1})) = \alpha^{-1}$. 
	\end{proof}

Let $\mathcal{A}$ be a subset of $\tilde{\mathcal{M}}$ consisting of matrices
\[\begin{pmatrix}
\alpha & \beta\\ \gamma & \delta
\end{pmatrix}\]
such that for all $h, h^{\prime}\in H$ and $k, k^{\prime}\in K$, the following hold:
	\begin{itemize}
	\item[$(A_{1})$] $\alpha(hh^{\prime}) = \alpha(h)(\gamma(h)\cdot \alpha(h^{\prime}))$,
	\item[$(A_{2})$] $\gamma(hh^{\prime}) = \gamma(h)^{\alpha(h^{\prime})}\gamma(h^{\prime})$,
	\item[$(A_{3})$] $\beta(kk^{\prime}) = \beta(k)(\delta(k)\cdot \beta(k^{\prime}))$,
	\item[$(A_{4})$] $\delta(kk^{\prime}) = \delta(k)^{\beta(k^{\prime})}\delta(k^{\prime})$,
	\item[$(A_{5})$] $\beta(k)(\delta(k)\cdot \alpha(h)) = \alpha(k\cdot h)(\gamma(k\cdot h)\cdot \beta(k^{h}))$,
	\item[$(A_{6})$] $\delta(k)^{\alpha(h)}\gamma(h) = \gamma(k\cdot h)^{\beta(k^{h})}\delta(k^{h})$, 
	\item[$(A_7)$] For any $h^{\prime}k^{\prime}\in G$, there exists a unique $h\in H$ and $k\in K$ such that $h^{\prime} = \alpha(h)(\gamma(h)\cdot \beta(k))$ and $k^{\prime} = \gamma(h)^{\beta(k)}\delta(k)$
	\end{itemize}
	for all $h,h^{\prime}\in H$ and $k, k^{\prime}\in K$.
	
	One can easily observe that $\mathcal{A}$ is a group in the monoid $\mathcal{M}$. Note that a one to one correspondence between $Aut(G)$ and $\mathcal{A}$ is already discussed in \cite{zap}. 

Also, for any $\begin{pmatrix}
\alpha & \beta\\ \gamma & \delta
\end{pmatrix}\in \mathcal{A}$, the maps $\alpha$ and $\delta$ need not be a bijection as can be seen in the following example.
\begin{example}\label{ex1}
	Let $H = \langle x, z \mid z^{3} = 1, xz = zx\rangle \simeq \mathbb{Z}\times \mathbb{Z}_{3}$ and $K = \langle y \rangle \simeq \mathbb{Z}$. Let the mutual actions $\sigma$ and $\tau$ be defined by 
	\begin{equation*}
	\tau_{h} = I_{K}, \sigma(y, x) = \sigma_{y}(x) = x\; \text{and}\; \sigma(y, z) = \sigma_{y}(z) = z^{-1}.
	\end{equation*}
	Thus $G = H \bowtie K$ with the presentation given as
	\[G =  \langle x,y, z \mid z^{3} = 1, xz = zx, xy = yx, yzy^{-1} = z^{-1}\rangle \simeq (\mathbb{Z}\times \mathbb{Z}_{3})\bowtie \mathbb{Z}.\] 
Now, let the maps $\alpha: H \longrightarrow H$, $\beta: K \longrightarrow H$, $\gamma: H \longrightarrow K$ and $\delta : K \longrightarrow K$ be defined by 
	\begin{align*}
	\alpha(x) = x^{3} \;\; \text{and}\;\; \alpha(z) = z, &\;\;\;\beta(y) = x^{4}\\
	\gamma(x) = y^{2}\;\; \text{and}\;\; \gamma(z) = 1, &\;\;\;\delta(y) = y^{3}.
	\end{align*}
	Evidently, the maps $\alpha$, $\beta$, $\gamma$ and $\delta$ satisfy all the conditions $(A_{1}) - (A_{7})$. Thus $\begin{pmatrix}
	\alpha & \beta\\ \gamma & \delta
	\end{pmatrix} \in \mathcal{A}$. However, the maps $\alpha$ and $\delta$ are not bijections.
\end{example}
Now, let us consider some important subsets of $Aut(G)$ given as below 
\begin{align*}
P &= \left\{\alpha\in Aut(H) \mid \alpha(k\cdot h) = k\cdot \alpha(h) \; \text{and}\; k^{\alpha(h)} = k^{h}, \forall\; h\in H, k\in K\right\},\\
Q &= \left\{\beta\in CHom(K,H) \mid k = k^{\beta(k^{\prime})}, \beta(k)(k\cdot h) = (k\cdot h)\beta(k^{h}), \forall\; h\in H, k\in K\right\},\\
R &= \left\{\gamma \in CHom(H,K) \mid h^{\prime} = \gamma(h)\cdot h^{\prime}, \gamma(k\cdot h)k^{h} = k^{h}\gamma(h), \forall\; h\in H, k\in K\right\},\\
S &= \left\{\delta\in Aut(K) \mid \delta(k)\cdot h = k\cdot h, \delta(k)^{h} = \delta(k^{h}), \forall\; h\in H, k\in K\right\}
\end{align*}
One can easily note that $P$, $Q$, $R$ and $S$ are the subgroups of the group  $Aut(G)$. In addition, it is easy to check that $P$ and $S$ normalizes both $Q$ and $R$. On the other hand, the corresponding subsets of the group $\mathcal{A}$ are 
	\begin{align*}
	A &= \left\{\begin{pmatrix}
	\alpha & 0 \\ 0 & 1
	\end{pmatrix} \mid \alpha \in P \right\},\\
	B &= \left\{\begin{pmatrix}
	1 & \beta \\ 0 & 1
	\end{pmatrix} \mid \beta \in Q \right\},\\
	C &= \left\{\begin{pmatrix}
	1 & 0 \\ \gamma & 1
	\end{pmatrix} \mid \gamma \in R \right\},\\
	D &= \left\{\begin{pmatrix}
	1 & 0 \\ 0 & \delta
	\end{pmatrix} \mid \delta \in S \right\}.
	\end{align*}
	One can easily note that $A$, $B$, $C$ and $D$ are the subgroups of the group $\mathcal{A}$. Additionally, it is easy to check that $A$ and $D$ normalize both $B$ and $C$. 

\begin{lemma}\label{lem1}
	Let $\begin{pmatrix}
	1 & \beta \\ \gamma & 1
	\end{pmatrix}\in \mathcal{A}$. Then for all $h\in H$ and $k,k^{\prime}\in K$, the following hold:
	\begin{itemize}
		\item[$(i)$] $[Im(\beta), Im(\beta\gamma)] = 1$,
		\item[$(ii)$] $\beta\gamma(k\cdot h)  = k^{h}\cdot \beta\gamma(h)$,
		\item[$(iii)$] $(k\cdot \beta(k^{\prime}))^{-1} = k\cdot (\beta(k^{\prime}))^{-1}$. 
	\end{itemize}
\end{lemma}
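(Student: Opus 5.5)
The plan is to specialise the defining conditions $(A_1)$--$(A_6)$ of $\mathcal{A}$ to $\alpha = 1$, $\delta = 1$. This gives a handful of simple identities, and each of $(i)$--$(iii)$ follows by substituting well-chosen arguments. Condition $(A_7)$ will not be needed.

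\textbf{Reductions.} First I would record what the conditions become.
\begin{itemize}
\item[(a)] From $(A_1)$, $hh^{\prime} = h(\gamma(h)\cdot h^{\prime})$, so $\gamma(h)\cdot h^{\prime} = h^{\prime}$ for all $h,h^{\prime}\in H$. Thus every element of $Im(\gamma)$ acts trivially on $H$.
\item[(b)] From $(A_4)$, $kk^{\prime} = k^{\beta(k^{\prime})}k^{\prime}$, so $k^{\beta(k^{\prime})} = k$ for all $k,k^{\prime}\in K$.
\item[(c)] From $(A_3)$, $\beta(kk^{\prime}) = \beta(k)(k\cdot \beta(k^{\prime}))$, and in particular $\beta(1)=1$.
\item[(d)] From $(A_5)$, after using (a) to drop the action of $\gamma(k\cdot h)$, we get $\beta(k)(k\cdot h) = (k\cdot h)\beta(k^{h})$ for all $h\in H$, $k\in K$.
\end{itemize}

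\textbf{Part $(iii)$.} Apply $(C5)$ to $1 = k\cdot(\beta(k^{\prime})\beta(k^{\prime})^{-1})$. This gives $1 = (k\cdot\beta(k^{\prime}))(k^{\beta(k^{\prime})}\cdot \beta(k^{\prime})^{-1})$. By (b), $k^{\beta(k^{\prime})} = k$, so the second factor is $k\cdot\beta(k^{\prime})^{-1}$, and $(iii)$ follows at once.

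\textbf{Part $(i)$.} Put $k = \gamma(x)$ in (d). By (a), $\gamma(x)\cdot h = h$, so $\beta\gamma(x)\,h = h\,\beta(\gamma(x)^{h})$. Now take $h = \beta(k)$. By (b), $\gamma(x)^{\beta(k)} = \gamma(x)$, hence $\beta\gamma(x)\beta(k) = \beta(k)\beta\gamma(x)$, which is $(i)$.

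\textbf{Part $(ii)$.} This is the step I expect to be the main obstacle, because $\gamma$ and $\beta$ interact only through $(A_6)$. The plan has four steps.
\begin{itemize}
\item[1.] Rewrite $(A_6)$ using (b), which removes the superscript $\beta(k^{h})$. This gives $k^{h}\gamma(h) = \gamma(k\cdot h)k^{h}$, i.e. $\gamma(k\cdot h) = k^{\prime}\gamma(h)k^{\prime -1}$ with $k^{\prime} = k^{h}$.
\item[2.] Expand $\beta(k^{\prime}\gamma(h)k^{\prime-1})$ with (c) twice, $(C5)$, (a) and (b). This should give $\beta\gamma(k\cdot h) = \beta(k^{\prime})\,(k^{\prime}\cdot\beta\gamma(h))\,(k^{\prime}\cdot\beta(k^{\prime-1}))$.
\item[3.] From $\beta(1)=1$ and (c), $k^{\prime}\cdot \beta(k^{\prime -1}) = \beta(k^{\prime})^{-1}$. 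Hence $\beta\gamma(k\cdot h)$ is the conjugate of $k^{\prime}\cdot\beta\gamma(h)$ by $\beta(k^{\prime})$.
\item[4.] To kill the conjugation, apply (d) with $h$ replaced by $\beta\gamma(x)$. Using (b) to simplify $k^{\beta\gamma(x)} = k$, this gives $\beta(k)(k\cdot\beta\gamma(x)) = (k\cdot\beta\gamma(x))\beta(k)$. So $\beta(k^{\prime})$ commutes with $k^{\prime}\cdot\beta\gamma(h)$, and $(ii)$ follows.
\end{itemize}
Most of the care goes into step 2, the expansion of $\beta$ on a conjugate.
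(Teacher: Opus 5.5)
Your proposal is correct and is essentially the paper's proof. Parts $(i)$ and $(iii)$ are the same computations: for $(i)$ the paper also applies $(A_5)$ with $\gamma(h)$ in place of $k$ and $\beta(k)$ in place of $h$, then uses $(A_1)$ and $(A_4)$. Part $(ii)$ uses the same ingredients, namely $(A_6)$ with the superscript removed, $(A_3)$, the trivial action of $\gamma(H)$, and $(A_5)$ at $\beta\gamma(h)$ to show that $\beta(k^{h})$ commutes with $k^{h}\cdot\beta\gamma(h)$. The only difference is that the paper applies $\beta$ to both sides of $k^{h}\gamma(h)=\gamma(k\cdot h)k^{h}$ and expands each side with $(A_3)$, which avoids your expansion of $\beta$ on a conjugate and the auxiliary identity $k^{\prime}\cdot\beta(k^{\prime -1})=\beta(k^{\prime})^{-1}$.
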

\begin{proof}
	Let $h\in H$ and $k\in K$. Then
	\begin{itemize}
		\item[$(i)$] 
		\begin{align*}
		\beta\gamma(h) \beta(k) &= \beta(\gamma(h))(\gamma(h)\cdot \beta(k))\; \text{(using $(A_{1})$)}\\
		&= (\gamma(h)\cdot \beta(k))\beta\left(\gamma(h)^{\beta(k)}\right)\; \text{(using $(A_{5})$)}\\
		&= \beta(k)\beta\gamma(h)\; \text{(using $(A_{1})$ and $(A_{4})$)}.
		\end{align*}
		Hence $[Im(\beta), Im(\beta\gamma)] = 1$.
		\item[$(ii)$] 
		\begin{align*}
		k^{h}\cdot \beta\gamma(h) &= \left((k^{h}\cdot \beta\gamma(h))\beta((k^{h})^{\beta\gamma(h)})\right)\beta((k^{h})^{\beta\gamma(h)})^{-1}\\
		&= \beta(k^{h})(k^{h}\cdot \beta\gamma(h))\beta(k^{h})^{-1}\\
		&= \beta(k^{h}\gamma(h))\beta(k^{h})^{-1}\\
		&= \beta(\gamma(k\cdot h)k^{h})\beta(k^{h})^{-1}\; \text{(using $(A_{6})$)}\\
		&= \beta(\gamma(k\cdot h))(\gamma(k\cdot h)\cdot \beta(k^{h})\beta(k^{h})^{-1}\\
		&= \beta\gamma(k\cdot h).
		\end{align*}
		\item[$(iii)$] 
		\begin{align*}
			(k\cdot \beta(k^{\prime}))(k\cdot \beta(k^{\prime})^{-1}) &= 	(k\cdot \beta(k^{\prime}))(k^{\beta(k^{\prime})}\cdot \beta(k^{\prime})^{-1})\\
			&= k\cdot (\beta(k^{\prime})\beta(k^{\prime})^{-1})\\
			&= k\cdot 1\\
			&= 1 
		\end{align*}
	 Hence $(k\cdot \beta(k^{\prime})^{-1}) = k\cdot \beta(k^{\prime})^{-1}$
	\end{itemize}

\end{proof}

	\begin{lemma}\label{lem2}
		Let $\begin{pmatrix}
		1 & \beta\\ \gamma & 1
		\end{pmatrix}\in \mathcal{A}$. Then the following hold
		\begin{itemize}
			\item[$(i)$] $\begin{pmatrix}
			1-\beta\gamma & 0\\ 0 & 1
			\end{pmatrix}\in A$, 
			\item[$(ii)$] $\begin{pmatrix}
			1 & (1-\beta\gamma)^{-1}\beta \\ 0 & 1
			\end{pmatrix}\in B$.
			\item[$(iii)$] $\begin{pmatrix}
			1 & 0\\ \gamma & 1
			\end{pmatrix}\in C$.
		\end{itemize}
	\end{lemma}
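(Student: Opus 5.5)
Specialize the conditions $(A_1)$--$(A_7)$ to $\alpha=1$, $\delta=1$ and read off the required properties.

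With $\alpha=I_H$ and $\delta=I_K$ the conditions become:
\begin{itemize}
\item $(A_1)$: $hh'=h(\gamma(h)\cdot h')$, so $\gamma(h)\cdot h'=h'$ for all $h,h'$. Thus $\gamma(H)$ acts trivially on $H$ through $\sigma$.
\item $(A_4)$: $kk'=k^{\beta(k')}k'$, so $k^{\beta(k')}=k$. Thus $\beta(K)$ acts trivially on $K$ through $\tau$.
\item $(A_2)$: $\gamma(hh')=\gamma(h)^{h'}\gamma(h')$, so $\gamma$ is a crossed homomorphism.
\item $(A_3)$: $\beta(kk')=\beta(k)(k\cdot\beta(k'))$, so $\beta$ is a crossed homomorphism.
\item $(A_5)$: $\beta(k)(k\cdot h)=(k\cdot h)\beta(k^h)$, using $\gamma(k\cdot h)\cdot(-)=\mathrm{id}$.
\item $(A_6)$: $k^h\gamma(h)=\gamma(k\cdot h)k^h$, using $k^{\beta(\cdot)}=k$.
\end{itemize}
Part (iii) is then immediate: $\gamma\in \mathrm{CHom}(H,K)$ satisfies exactly the defining conditions of $R$, so $\begin{pmatrix}1&0\\ \gamma&1\end{pmatrix}\in C$. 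In the same way $\beta\in Q$.

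For part (i), put $\phi=1-\beta\gamma$, i.e.\ $\phi(h)=h\,\beta\gamma(h)^{-1}$. Three things must be shown.
\begin{itemize}
\item \emph{$\phi$ is a homomorphism.} Since $\gamma(h)\cdot(-)$ is trivial, $\beta\gamma(hh')=\beta(\gamma(h)^{h'}\gamma(h'))=\beta(\gamma(h)^{h'})\,(\gamma(h)^{h'}\cdot\beta\gamma(h'))$. Rewrite $\beta(\gamma(h)^{h'})$ via $(A_5)$ with $k=\gamma(h)$: this gives $h'^{-1}\beta\gamma(h)h'$, because $\gamma(h)\cdot h'=h'$. The factor $\gamma(h)^{h'}\cdot\beta\gamma(h')$ should reduce to $\beta\gamma(h')$ using Lemma \ref{lem1}(ii) together with the triviality of $\gamma(H)$-actions. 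Hence $\beta\gamma(hh')=h'^{-1}\beta\gamma(h)h'\,\beta\gamma(h')$, and then $\phi(hh')=hh'\,\beta\gamma(h')^{-1}h'^{-1}\beta\gamma(h)^{-1}h'$. To reach $\phi(h)\phi(h')$ one further needs $\beta\gamma(h')$ to commute with $h'^{-1}\beta\gamma(h)h'$; this should follow from Lemma \ref{lem1}(i) combined with the rewriting above, since $h'^{-1}\beta\gamma(h)h'$ lies in $\mathrm{Im}(\beta)$.
\item \emph{$\phi$ is bijective.} Use $(A_7)$: the matrix is invertible in $\mathcal{A}$. Theorem \ref{t31}(ii) applies in reverse, because $\delta=1$ is invertible and $\det_H=\phi$. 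Alternatively, solve $h'=\phi(h)$ directly with the uniqueness in $(A_7)$. Concretely, given $h'$, $(A_7)$ yields unique $h,k$ with $h'=h\beta(k)$ and $1=\gamma(h)k$. Then $k=\gamma(h)^{-1}$ and $h'=h\beta(\gamma(h)^{-1})$. Using the crossed-homomorphism property and Lemma \ref{lem1}(iii), $\beta(\gamma(h)^{-1})=\beta\gamma(h)^{-1}$, which gives existence and uniqueness of a preimage under $\phi$.
\item \emph{The compatibility conditions defining $P$.} $\phi(k\cdot h)=k\cdot\phi(h)$ follows from Lemma \ref{lem1}(ii), $(C5)$, and Lemma \ref{lem1}(iii). $k^{\phi(h)}=k^h$ follows from $(C6)$ and $k^{\beta(\cdot)}=k$, after writing $\beta\gamma(h)^{-1}=\beta(\gamma(h)^{-1})$.
\end{itemize}

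For part (ii), note that $A$ normalizes $B$ and $\begin{pmatrix}1&\beta\\0&1\end{pmatrix}\in B$. Conjugating by $\begin{pmatrix}\phi^{-1}&0\\0&1\end{pmatrix}$, the product rule of $\mathcal{M}$ gives the matrix $\begin{pmatrix}1&\phi^{-1}\beta\\0&1\end{pmatrix}$, which therefore lies in $B$. Alternatively, verify directly that $\phi^{-1}\beta\in Q$.

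The main obstacle is the homomorphism step in (i). The delicate point is the commutation needed there: Lemma \ref{lem1}(i) gives $[\mathrm{Im}\beta,\mathrm{Im}\beta\gamma]=1$, and the conjugate $h'^{-1}\beta\gamma(h)h'$ must be identified as an element of $\mathrm{Im}\beta$ through $(A_5)$.
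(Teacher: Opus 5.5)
Your proposal is correct and follows the paper's proof essentially step for step: the same specialization of $(A_1)$--$(A_6)$, the same key identity $\beta\gamma(hh')=\beta\gamma(h')\,h'^{-1}\beta\gamma(h)h'$ (you get the swap from Lemma~\ref{lem1}$(i)$ where the paper applies $(A_5)$ directly), the same $(A_7)$ argument for bijectivity, the same use of Lemma~\ref{lem1}$(ii)$,$(iii)$ with $(C5)$, $(C6)$ for membership in $P$, and for $(ii)$ your conjugation by $\begin{pmatrix}\phi^{-1}&0\\0&1\end{pmatrix}$ simply repackages the paper's direct check that $(1-\beta\gamma)^{-1}\beta\in Q$. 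One small correction: Theorem~\ref{t31}$(ii)$ only gives the implication from invertible $\det_{H}(\theta)$ to invertible $\theta$, so the converse you invoke is Theorem~\ref{t33}; your direct $(A_7)$ argument, which is the paper's, makes that citation unnecessary.
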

	\begin{proof}
			Let $\begin{pmatrix}
			1 & \beta\\ \gamma & 1
			\end{pmatrix}\in \mathcal{A}$. Then using conditions $(A_{1})-(A_{6})$, we have $k = k^{\beta(k^{\prime})}, \beta(k)(k\cdot h) = (k\cdot h)\beta(k^{h}),h^{\prime} = \gamma(h)\cdot h^{\prime}, \gamma(k\cdot h)k^{h} = k^{h}\gamma(h)$ for all $h, h^{\prime}\in H$ and $k, k^{\prime}\in K$. We will use these equalities throughout the proof.
			
			\begin{itemize}
				\item[$(i)$]   	First, we prove that $1-\beta\gamma$ is a group homomorphism. Let $h, h^{\prime}\in H$. Then
			\begin{align*}
			(1-\beta\gamma)(hh^{\prime}) &= hh^{\prime}(\beta\gamma(hh^{\prime}))^{-1}\\
			&= hh^{\prime}(\beta(\gamma(h)^{h^{\prime}}\gamma(h^{\prime})))^{-1}\\
			&= hh^{\prime}\left(\beta(\gamma(h)^{h^{\prime}})(\gamma(h)^{h^{\prime}}\cdot \beta\gamma(h^{\prime}))\right)^{-1}\\
			&= hh^{\prime}\left((\gamma(h)^{h^{\prime}}\cdot \beta\gamma(h^{\prime}))\beta\left((\gamma(h)^{h^{\prime}})^{\beta\gamma(h^{\prime})}\right)\right)^{-1}\\
			&= hh^{\prime}\left((\gamma(h)\cdot h^{\prime})^{-1}\left((\gamma(h)\cdot h^{\prime})(\gamma(h)^{h^{\prime}}\cdot \beta\gamma(h^{\prime}))\right)\beta(\gamma(h)^{h^{\prime}})\right)^{-1}\\
			&= hh^{\prime}\left({h^{\prime}}^{-1}(\gamma(h)\cdot h^{\prime}\beta\gamma(h^{\prime}))\beta(\gamma(h)^{h^{\prime}})\right)^{-1}\\
			&= hh^{\prime}\left({h^{\prime}}^{-1}(h^{\prime}\beta\gamma(h^{\prime}))\beta(\gamma(h)^{h^{\prime}})\right)^{-1}\\
			&= hh^{\prime}\left(\beta\gamma(h^{\prime})(\gamma(h)\cdot h^{\prime})^{-1}\left((\gamma(h)\cdot h^{\prime})\beta(\gamma(h)^{h^{\prime}})\right)\right)^{-1}\\
			&= hh^{\prime}\left(\beta\gamma(h^{\prime}){h^{\prime}}^{-1}\beta\gamma(h)(\gamma(h)\cdot h^{\prime})\right)^{-1}\\
			&= hh^{\prime}\left(\beta\gamma(h^{\prime}){h^{\prime}}^{-1}\beta\gamma(h)h^{\prime}\right)^{-1}\\
			&= h(\beta\gamma(h))^{-1}h^{\prime}(\beta\gamma(h^{\prime}))^{-1}\\
			&= (1-\beta\gamma)(h)(1-\beta\gamma)(h^{\prime}).
			\end{align*}
			Thus $1-\beta\gamma$ is a group homomorphism. Now, let $h^{\prime}\in H$ be any element. Then using $(A_{7})$, there exists a unique $h\in H$ and $k\in K$ such that $h^{\prime} = h(\gamma(h)\cdot \beta(k))$ and $1 = \gamma(h)^{\beta(k)}k$. This implies that $\gamma(h)k = 1$. Therefore, $h^{\prime} = h(\gamma(h)\cdot \beta(\gamma(h)^{-1})) = h\beta(\gamma(h))^{-1} = (1-\beta\gamma)(h)$. Thus $1-\beta\gamma$ is a surjection. Also, the uniqueness of elements $h\in H$ and $k\in K$ in $(A_{7})$ gives that $1-\beta\gamma$ is an injective map. Thus $1-\beta\gamma \in Aut(H)$.

     Now, for all $h\in H$ and $k\in K$, we have
     \begin{align*}
     (1-\beta\gamma)(k\cdot h) &= (k\cdot h)\left(\beta\gamma(k\cdot h)\right)^{-1}\\
     &= (k\cdot h)\left(k^{h}\cdot \beta\gamma(h)\right)^{-1}\; (\text{using Lemma \ref{lem1}$(ii)$})\\
     &= (k\cdot h)\left(k^{h}\cdot \beta\gamma(h)^{-1}\right)\; (\text{using Lemma \ref{lem1}$(iii)$})\\
     &= k\cdot(h\beta\gamma(h)^{-1})\\
     &= k\cdot (1-\beta\gamma)(h).
     \end{align*}
Also, we have $k^{(1-\beta\gamma)(h)} = k^{h\beta\gamma(h)^{-1}} = (k^{h})^{\beta(\gamma(h)^{-1})} = k^{h}$. Thus $(1-\beta\gamma)\in P$ and so $\begin{pmatrix}
1-\beta\gamma & 0 \\ 0 & 1
\end{pmatrix}\in A$.
			\item[$(ii)$] Using part $(i)$, we get $\begin{pmatrix}
			(1-\beta\gamma)^{-1} & 0\\ 0 & 1
			\end{pmatrix}\in A$. Let $k, k^{\prime}\in K$. Then
			\begin{align*}
			(1-\beta\gamma)^{-1}\beta(kk^{\prime}) &= (1-\beta\gamma)^{-1}\left(\beta(k)(k\cdot \beta(k^{\prime}))\right)\\
			&= (1-\beta\gamma)^{-1}(\beta(k))(1-\beta\gamma)^{-1}\left(k\cdot \beta(k^{\prime})\right)\\
			&= (1-\beta\gamma)^{-1}\beta(k)(k\cdot (1-\beta\gamma)^{-1}\beta(k^{\prime})).
			\end{align*}
			Therefore, $(1-\beta\gamma)^{-1}\beta\in CHom(K, H)$. Also, $k^{(1-\beta\gamma)^{-1}\beta(k^{\prime})} = k^{\beta(k^{\prime})} = k$. At the last, we have 			
			\begin{align*}
			(1-\beta\gamma)^{-1}\beta(k)(k\cdot h) &= (1-\beta\gamma)^{-1}(\beta(k)(1-\beta\gamma)(k\cdot h))\\
			&= (1-\beta\gamma)^{-1}(\beta(k)(k\cdot (1-\beta\gamma)(h)))\\
			 &= (1-\beta\gamma)^{-1}((k\cdot (1-\beta\gamma)(h))\beta(k^{(1-\beta\gamma)(h)}))\\
			 &= (1-\beta\gamma)^{-1}((k\cdot (1-\beta\gamma)(h))\beta(k^{h}))\\
			 &= (k\cdot h)(1-\beta\gamma)^{-1}\beta(k^{h}).
			\end{align*}
		 Hence $(1-\beta\gamma)^{-1}\beta \in Q$ and so $\begin{pmatrix}
			1 & (1-\beta\gamma)^{-1}\beta \\ 0 & 1
			\end{pmatrix}\in B$.
			\item[$(iii)$] Using $(A_{1}), (A_{2})$ and $(A_{6})$, it is easy to observe that $\begin{pmatrix}
			1 &0\\ \gamma & 1
			\end{pmatrix}\in C$.
		\end{itemize}
	\end{proof}
	\begin{theorem}\label{th32}
		Let $G= H\bowtie K$ and let $\mathcal{A}$ be defined as above. Then $\mathcal{A} = ABCD$.
	\end{theorem}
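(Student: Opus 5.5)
The inclusion $ABCD\subseteq\mathcal{A}$ is immediate, since $A$, $B$, $C$, $D$ are subgroups of the group $\mathcal{A}$. So the work is in the reverse inclusion: for an arbitrary $\theta=\begin{pmatrix}\alpha&\beta\\ \gamma&\delta\end{pmatrix}\in\mathcal{A}$, I would write $\theta=abcd$ with $a\in A$, $b\in B$, $c\in C$, $d\in D$. I would do this in two stages.

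\emph{Stage 1: the unipotent case.} First I treat matrices of the form $\begin{pmatrix}1&\beta\\ \gamma&1\end{pmatrix}\in\mathcal{A}$. By Lemma \ref{lem2}, the three matrices
\[
a=\begin{pmatrix}1-\beta\gamma&0\\0&1\end{pmatrix},\quad
b=\begin{pmatrix}1&(1-\beta\gamma)^{-1}\beta\\0&1\end{pmatrix},\quad
c=\begin{pmatrix}1&0\\ \gamma&1\end{pmatrix}
\]
lie in $A$, $B$, $C$ respectively. I would then compute $abc$ with the multiplication rule of $\mathcal{M}$ and check that it equals $\begin{pmatrix}1&\beta\\ \gamma&1\end{pmatrix}$. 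The entries would be verified as follows:
\begin{itemize}
\item[$\bullet$] \emph{Top-right entry.} $(1-\beta\gamma)(1-\beta\gamma)^{-1}\beta=\beta$.
\item[$\bullet$] \emph{Diagonal entries.} The extra terms $(\gamma'\alpha\cdot\beta'\gamma)$ simplify because $\gamma(h)\cdot h'=h'$ and $k^{\beta(k')}=k$, the identities extracted at the start of the proof of Lemma \ref{lem2}.
\item[$\bullet$] \emph{Top-left entry.} This reduces to $(1-\beta\gamma)+\beta\gamma=1$, using Lemma \ref{lem1}$(i)$ to commute $\mathrm{Im}(\beta)$ past $\mathrm{Im}(\beta\gamma)$.
\item[$\bullet$] \emph{Bottom-left entry.} This reduces to $\gamma$, using $\gamma(k\cdot h)k^{h}=k^{h}\gamma(h)$.
\end{itemize}
This gives $\begin{pmatrix}1&\beta\\ \gamma&1\end{pmatrix}\in ABC$.

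\emph{Stage 2: reduction of a general $\theta$.} Here I would multiply $\theta$ by suitable elements of $A$ and $D$ so that both diagonal entries become $1$. When $\alpha$ and $\delta$ are bijective, I would first show $\alpha\in P$ and $\delta\in S$ using $(A_1)$--$(A_6)$. I would then set $\theta'=\begin{pmatrix}\alpha^{-1}&0\\0&1\end{pmatrix}\theta\begin{pmatrix}1&0\\0&\delta^{-1}\end{pmatrix}$ and check from the product formula that $\theta'$ has the shape $\begin{pmatrix}1&\beta'\\ \gamma'&1\end{pmatrix}$. Applying Stage 1 to $\theta'$ gives $\theta\in A\cdot ABC\cdot D=ABCD$, since $A$ is a group.

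If instead a one-sided form such as $\theta=\begin{pmatrix}1&\beta'\\ \gamma'&1\end{pmatrix}d$ arises with the diagonal factor on the other side, I would reorder the factors using the facts stated before Lemma \ref{lem1}: $A$ and $D$ normalize $B$ and $C$.

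The main obstacle is Stage 2 in general. Example \ref{ex1} shows that for $\theta\in\mathcal{A}$ the maps $\alpha$ and $\delta$ need not be bijective, let alone lie in $P$ or $S$. My first attempt would be to replace $\alpha$ and $\delta$ by the "corrected" diagonal parts suggested by Theorem \ref{t31}, namely $\det_H$ and $\det_K$, which are automorphisms when $\theta$ is invertible, and to factor through those instead. If that fails, the decomposition $\mathcal{A}=ABCD$ may require an extra hypothesis, for instance that $\alpha$ and $\delta$ are invertible or that the relevant determinant lies in $P$ or $S$. I would check Example \ref{ex1} directly to see whether such a hypothesis is genuinely needed.
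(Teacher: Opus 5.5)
\textbf{The gap you flagged is real and fatal: the statement is false in general.} Your two stages are exactly the paper's proof. Stage 1 is the factorization obtained from Lemma~\ref{lem2}. Stage 2 is the factorization $\theta=\mathrm{diag}(\alpha,1)\,\theta'\,\mathrm{diag}(1,\delta)$ with $\theta'=\begin{pmatrix}1&\alpha^{-1}\beta\delta^{-1}\\ \gamma&1\end{pmatrix}$. The paper gets past Stage 2 only by asserting that $\mathrm{diag}(\alpha^{-1},1)\in A$ and $\mathrm{diag}(1,\delta^{-1})\in D$. That tacitly assumes $\alpha\in P$ and $\delta\in S$, which is precisely the step you could not justify. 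It cannot be justified, because $ABCD$ is too small. Take $a,b,c,d$ with entries $\alpha_0\in P$, $\beta_0\in Q$, $\gamma_0\in R$, $\delta_0\in S$ and compose them as maps:
\[ abcd(h)=\alpha_0\bigl(h\,\beta_0\gamma_0(h)\bigr)\,\gamma_0(h), \qquad abcd(k)=\alpha_0\beta_0\delta_0(k)\,\delta_0(k). \]
So every element of $ABCD$ has $\delta$-entry $\delta_0\in S$, in particular bijective. Moreover, whenever its $\gamma$-entry is $0$, its $\alpha$-entry is $\alpha_0\in P$.

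\emph{Non-bijective $\delta$.} Example~\ref{ex1} is a genuine automorphism. The map $x\mapsto x^3y^2$, $z\mapsto z$, $y\mapsto x^4y^3$ respects the relations, fixes $\langle z\rangle$ pointwise, and induces $\begin{pmatrix}3&4\\2&3\end{pmatrix}\in GL_2(\mathbb{Z})$ on $G/\langle z\rangle\cong\mathbb{Z}^2$. Yet $\delta(y)=y^3$ is not bijective, so this automorphism is not in $ABCD$. A simpler counterexample is the coordinate swap on $\mathbb{Z}_2\times\mathbb{Z}_2$ with trivial mutual actions, which has $\delta=0$.

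\emph{Bijectivity is not enough.} Your planned step of deriving $\alpha\in P$ and $\delta\in S$ from $(A_1)$--$(A_6)$ when $\alpha,\delta$ are bijective also fails. Let $u$ act on $S_3=\langle r,s\rangle$ by conjugation by $s$, form $S_3\rtimes\langle u\rangle$, and let $\theta$ be conjugation by $r$. Then $\gamma=0$, $\beta(u)=r^2$, $\delta=I_K$, and $\alpha$ is the inner automorphism of $S_3$ by $r$, so both diagonal entries are bijective. However, $\alpha(u\cdot s)=r^2s\neq rs=u\cdot\alpha(s)$, so $\alpha\notin P$. Since $\gamma=0$, the computation above shows $\theta\notin ABCD$.

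\emph{The determinant fallback cannot help.} Replacing $\alpha,\delta$ by $\det_H,\det_K$ does not rescue the argument. The obstruction lies in the shape of every element of $ABCD$, not in which factorization you choose.

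\emph{What your argument does prove.} Your argument, like the paper's, establishes the restricted statement: every $\theta\in\mathcal{A}$ with $\alpha\in P$ and $\delta\in S$ lies in $ABCD$. In that case $\theta'$ is a product of three elements of the group $\mathcal{A}$, so $\theta'\in\mathcal{A}$ and your Stage~1 applies to it.
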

	\begin{proof}
Let $\begin{pmatrix}
1 & \beta\\ \gamma & 1
\end{pmatrix} \in \mathcal{A}$. Then using Lemma \ref{lem2}, we get
		\begin{equation}\label{e3}
		\begin{pmatrix}
		1 & \beta\\ \gamma & 1
		\end{pmatrix} = \begin{pmatrix}
		1-\beta\gamma & 0\\ 0 & 1
		\end{pmatrix}\begin{pmatrix}
		1 & (1-\beta\gamma)^{-1}\beta\\ 0 & 1
		\end{pmatrix}\begin{pmatrix}
		1 & 0\\ \gamma & 1
		\end{pmatrix}\in ABC.
		\end{equation}
		Now, let $\begin{pmatrix}
		\alpha & \beta \\ \gamma & \delta
		\end{pmatrix} \in \mathcal{A}$, where the maps satisfy the conditions $(A_{1})-(A_{7})$. Then 
		\begin{equation}\label{e4}
		\begin{pmatrix}
		\alpha & \beta\\ \gamma & \delta
		\end{pmatrix} = \begin{pmatrix}
		\alpha & 0\\ 0 & 1
		\end{pmatrix}\begin{pmatrix}
		1 & \alpha^{-1}\beta\delta^{-1}\\ \gamma & 1
		\end{pmatrix}\begin{pmatrix}
		1 & 0\\ 0 & \delta
		\end{pmatrix}.
		\end{equation}
		Note that $\begin{pmatrix}
		\alpha^{-1} & 0\\ 0 & 1
		\end{pmatrix}\in A$ and $\begin{pmatrix}
		1 & 0\\ 0 & \delta^{-1}
		\end{pmatrix}\in D$. Then for all $k,k^{\prime}\in K$, we have
		\begin{align*}
		\alpha^{-1}\beta\delta^{-1}(kk^{\prime}) &= \alpha^{-1}\beta\left(\delta^{-1}(k)\delta^{-1}(k^{\prime})\right)\\
		&= \alpha^{-1}\left(\beta(\delta^{-1}(k))\left(\delta(\delta^{-1}(k))\cdot \beta(\delta^{-1}(k^{\prime})) \right) \right)\\
		&= \alpha^{-1}\left(\beta\delta^{-1}(k)\left(k\cdot \beta\delta^{-1}(k^{\prime}) \right) \right)\\
		&= \alpha^{-1}\beta\delta^{-1}(k)\left(k\cdot \alpha^{-1}\beta\delta^{-1}(k^{\prime})\right)
		\end{align*}
		Thus $\alpha^{-1}\beta\delta^{-1} \in CHom(K, H)$. Now,
		\begin{align*}
		k^{\alpha^{-1}\beta\delta^{-1}(k^{\prime})} &= k^{\alpha^{-1}(\beta\delta^{-1}(k^{\prime}))}\\
		&= k^{\beta(\delta^{-1}(k^{\prime}))}\\
		&= (\delta(\delta^{-1}(k)))^{\beta(\delta^{-1}(k^{\prime}))}\delta(\delta^{-1}(k^{\prime}))(k^{\prime})^{-1}\\
		&= \delta(\delta^{-1}(k)\delta^{-1}(k^{\prime}))(k^{\prime})^{-1}\\
		&= \delta(\delta^{-1}(kk^{\prime}))(k^{\prime})^{-1}\\ 
		&= k.
		\end{align*} 
		 Also, we have
		\begin{align*}
		\alpha^{-1}\beta\delta^{-1}(k)(k\cdot h) &= \alpha^{-1}\beta\delta^{-1}(k)(k\cdot \alpha^{-1}(\alpha(h)))\\
		&=  \alpha^{-1}\beta\delta^{-1}(k)\alpha^{-1}(k\cdot \alpha(h))\\
		&= 	\alpha^{-1}\left(\beta(\delta^{-1}(k))(\delta(\delta^{-1}(k))\cdot \alpha(h)) \right)\\
		&= \alpha^{-1}\left(\alpha(\delta^{-1}(k)\cdot h)(\gamma(\delta^{-1}(k)\cdot h)\cdot \beta({\delta(k)}^{h})) \right)\\
		&= \alpha^{-1}\left(\alpha(\delta^{-1}(k)\cdot h)(\gamma(\delta^{-1}(k)\cdot h)\cdot \alpha(\alpha^{-1}\beta({\delta^{-1}(k)}^{h}))) \right)\\
		&= \alpha^{-1}\left(\alpha(\delta^{-1}(k)\cdot h)\left(\gamma(\delta^{-1}(k)\cdot h)\cdot \alpha(\alpha^{-1}\beta\delta^{-1}(k^{h})\right) \right)\\
		&= \alpha^{-1}\left(\alpha\left((\delta^{-1}(k)\cdot h)\alpha^{-1}\beta\delta^{-1}(k^{h})\right) \right)\\
		&= (\delta^{-1}(k)\cdot h)\alpha^{-1}\beta\delta^{-1}(k^{h})\\
		&= (k\cdot h)\alpha^{-1}\beta\delta^{-1}(k^{h}).
		\end{align*}
		 Therefore, $\alpha^{-1}\beta\delta^{-1}\in Q$ and so $\begin{pmatrix}
		1 & \alpha^{-1}\beta\delta^{-1}\\ 0 & 1
		\end{pmatrix}\in B$. By Equations (\ref{e3}) and (\ref{e4}), we get
		\[\begin{pmatrix}
		\alpha & \beta\\ \gamma & \delta
		\end{pmatrix} = \begin{pmatrix}
		\alpha & 0\\ 0 & 1
		\end{pmatrix}\begin{pmatrix}
		1 & \alpha^{-1}\beta\delta^{-1}\\ \gamma & 1
		\end{pmatrix}\begin{pmatrix}
		1 & 0\\ 0 & \delta
		\end{pmatrix}\in A(ABC)D \subseteq ABCD. \]
		Thus $\mathcal{A} \subseteq ABCD$. Also, it is evident that $ABCD \subseteq \mathcal{A}$. Hence $\mathcal{A} = ABCD$.

	\end{proof}
	
		\begin{theorem}\label{t32}
		Let $G = H\bowtie K$ and $\theta = \begin{pmatrix}
		\alpha & \beta\\ \gamma & \delta
		\end{pmatrix}\in End(G)$ such that $\alpha$ is invertible. Then $\theta$ is invertible if and only if $\det_{K}(\theta)$ is invertible.  
	\end{theorem}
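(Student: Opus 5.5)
The reverse implication is Theorem \ref{t31}$(i)$: if $\alpha$ and $\Delta_{K}=\det_{K}(\theta)$ are invertible, then $\theta$ is invertible. So the work is the forward implication. I will assume $\theta$ is invertible with $\alpha$ invertible, and produce an explicit two-sided inverse of $\Delta_{K}=-\gamma\alpha^{-1}\beta+\delta$ in $Map(K,K)$.

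The plan is to imitate the block computation that gives $\theta^{-1}$ in Theorem \ref{t31}, read backwards. Write
\[\theta^{-1}=\begin{pmatrix}\alpha' & \beta'\\ \gamma' & \delta'\end{pmatrix},\]
which lies in $\mathcal{M}$ by Theorem \ref{t1}. The natural candidate is $\Delta_{K}^{-1}=\delta'$, since in Theorem \ref{t31} the lower right entry of $\theta^{-1}$ is exactly $\Delta_{K}^{-1}$.

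\textbf{Step 1.} Expand $\theta\theta^{-1}=1$ with the product rule of $\mathcal{M}$. The upper right entry gives
\[\alpha\beta'+(\gamma\beta'\cdot\beta\delta')=0,\]
and the lower right entry gives
\[(\gamma\beta')^{\beta\delta'}+\delta\delta'=1.\]
Evaluate the first at $k\in K$. Using the unique factorization $G=HK$ and the fact that $\alpha$ is bijective, I aim to rewrite it as $\beta'=-\alpha^{-1}\beta\delta'$. This is the reverse of the step in Theorem \ref{t31}, where $\beta\Delta^{-1}$ was replaced by $\alpha(\alpha^{-1}\beta\Delta^{-1})$.

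\textbf{Step 2.} Substitute Step 1 into the lower right identity, exactly as in the last block of the proof of Theorem \ref{t31}. This should give
\[-\gamma\alpha^{-1}\beta\delta'+\delta\delta'=1,\]
that is, $\Delta_{K}\delta'=1$.

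\textbf{Step 3.} Get $\delta'\Delta_{K}=1$ symmetrically from $\theta^{-1}\theta=1$. Its entries give $\alpha'\beta+(\gamma'\beta\cdot\beta'\delta)=0$ and $(\gamma'\beta)^{\beta'\delta}+\delta'\delta=1$. From the upper left entry $\alpha'\alpha+(\gamma'\alpha\cdot\beta'\gamma)=1$ and the lower left entry $(\gamma'\alpha)^{\beta'\gamma}+\delta'\gamma=0$, I would solve for $\gamma'$ in terms of $\delta'$, aiming at $\gamma'=\delta'(-\gamma\alpha^{-1})$. Substituting then gives $\delta'(-\gamma\alpha^{-1}\beta+\delta)=1$. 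With both identities, $\Delta_{K}$ is invertible with inverse $\delta'$.

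\textbf{Main obstacle.} The difficulty is the nonadditivity of the operations $+$, $\cdot$ and exponentiation. The rewrites $\beta'=-\alpha^{-1}\beta\delta'$ and $\gamma'=\delta'(-\gamma\alpha^{-1})$ hold only pointwise and up to the twisted actions, and the paper's manipulations in Theorem \ref{t31} use identities such as $\gamma\beta'\cdot\beta\delta'=\gamma\beta'\cdot\alpha(\alpha^{-1}\beta\delta')$ together with $(A_1)$ and $(A_2)$.

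As a fallback that avoids heavy map algebra, I would prove bijectivity of $\Delta_{K}$ directly. Given $k'\in K$, apply surjectivity of $\theta$ in the form $(A_7)$ to the target $\alpha(h_0)\gamma(h_0)\cdots$, suitably chosen, to produce $k$ with $\Delta_{K}(k)=k'$. Injectivity would use the uniqueness in $(A_7)$. The key observation is that $\theta(\alpha^{-1}(\beta(k))^{-1}\,k)$ has $H$-component $1$ and $K$-component $\Delta_{K}(k)$, using $(A_1)$ and the inverse formula analogous to Lemma \ref{lem1}$(iii)$.
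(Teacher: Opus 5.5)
Your proposal is correct. The fallback you sketch at the end is the paper's proof, and your main line is a matrix-entry version of it. The paper never uses the entries of $\theta^{-1}$. It proves the single identity $\theta\bigl((\alpha^{-1}\beta(k))^{-1}k\bigr)=\Delta_{K}(k)$: condition (i) kills the $H$-component, and (ii), in the form $\gamma(v^{-1})^{\alpha(v)}=\gamma(v)^{-1}$, gives the $K$-component. Injectivity of $\Delta_{K}$ then follows from injectivity of $\theta$. For surjectivity it writes $\theta(hk)=k''$ and shows $h=(\alpha^{-1}\beta(k))^{-1}$ and $k''=\Delta_{K}(k)$.

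Your Steps 1--2 are exactly this surjectivity argument with $hk=\theta^{-1}(k'')$, i.e.\ $h=\beta'(k'')$ and $k=\delta'(k'')$. What your route adds is the explicit identification $\Delta_{K}^{-1}=\delta'$, the lower-right entry of $\theta^{-1}$, consistent with Theorem~\ref{t31}(i).

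The cost is Step 3, where two points are only gestured at.
First, $\gamma'=\delta'(-\gamma\alpha^{-1})$ follows from the lower-left entry of $\theta^{-1}\theta$ alone. Condition (iv) for $\theta^{-1}$, with $\delta'(1)=1$, gives $\delta'(\gamma(h)^{-1})^{\beta'\gamma(h)}=\delta'(\gamma(h))^{-1}$. Since $x\mapsto x^{h'}$ is a bijection of $K$ by (C1) and (C6), you get $\gamma'\alpha(h)=\delta'(\gamma(h)^{-1})$.
Second, $\delta'$ is not a homomorphism, so substituting into the lower-right entry gives $(\delta'(-\gamma\alpha^{-1}\beta))^{\beta'\delta}+\delta'\delta$. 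This equals $\delta'\Delta_{K}$ only by (iv) for $\delta'$, not by any distributivity.

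Simpler still: once you have your fallback identity, applying $\theta^{-1}$ to it gives $\beta'(\Delta_{K}(k))\,\delta'(\Delta_{K}(k))=(\alpha^{-1}\beta(k))^{-1}k$. Uniqueness of factorization then gives $\delta'\Delta_{K}=I_{K}$, which replaces Step 3 entirely.

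Two small corrections to the fallback. Write the element as $(\alpha^{-1}\beta(k))^{-1}k$, not something readable as $\alpha^{-1}(\beta(k)^{-1})k$, since $\alpha^{-1}$ need not be a homomorphism. The inverse identity you need is the one from (ii) above, not Lemma~\ref{lem1}(iii), whose proof relies on the special hypothesis $k^{\beta(k')}=k$.
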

	\begin{proof}
		Let $\alpha$ be invertible. First assume that $\theta$ is invertible. Now, let $k, k^{\prime}\in K$, such that $\det_{K}(\theta)(k) = \det_{K}(\theta)(k^{\prime})$. Then
		\begin{equation}\label{e7}
		(\gamma\alpha^{-1}\beta(k))^{-1}\delta(k) = (\gamma\alpha^{-1}\beta(k^{\prime}))^{-1}\delta(k^{\prime})
		\end{equation}  
		Now,
		\begin{align*}
		\begin{pmatrix}
		\alpha & \beta \\ \gamma & \delta
		\end{pmatrix}\begin{pmatrix}
		(\alpha^{-1}\beta(k))^{-1}\\ k
		\end{pmatrix} &= \begin{pmatrix}
		\alpha((\alpha^{-1}\beta(k))^{-1})({\gamma((\alpha^{-1}\beta(k))^{-1})}\cdot \beta(k))\\ ({\gamma((\alpha^{-1}\beta(k))^{-1})})^{\beta(k)}\delta(k)
		\end{pmatrix}\\
		&= \begin{pmatrix}
		\alpha((\alpha^{-1}\beta(k))^{-1})({\gamma((\alpha^{-1}\beta(k))^{-1})}\cdot \alpha(\alpha^{-1}\beta(k)))\\ ({\gamma((\alpha^{-1}\beta(k))^{-1})})^{\alpha(\alpha^{-1}\beta(k))}\delta(k)
		\end{pmatrix}\\
		&= \begin{pmatrix}
		\alpha((\alpha^{-1}\beta(k))^{-1}\alpha^{-1}\beta(k))\\ \gamma(\alpha^{-1}\beta(k))^{-1}\delta(k)
		\end{pmatrix}\\
		&= \begin{pmatrix}
		1\\ (\gamma\alpha^{-1}\beta(k))^{-1}\delta(k)
		\end{pmatrix}.
		\end{align*}
		Similarly, we get 
		\begin{equation*}
		\begin{pmatrix}
		\alpha & \beta \\ \gamma & \delta
		\end{pmatrix}\begin{pmatrix}
		(\alpha^{-1}\beta(k^{\prime}))^{-1}\\ k^{\prime}
		\end{pmatrix}  = \begin{pmatrix}
		1\\ (\gamma\alpha^{-1}\beta(k^{\prime}))^{-1}\delta(k^{\prime})
		\end{pmatrix}.
		\end{equation*}
		Using Equation (\ref{e7}), we get
		\[\begin{pmatrix}
		\alpha & \beta \\ \gamma & \delta
		\end{pmatrix}\begin{pmatrix}
		(\alpha^{-1}\beta(k))^{-1}\\ k
		\end{pmatrix} = \begin{pmatrix}
		\alpha & \beta \\ \gamma & \delta
		\end{pmatrix}\begin{pmatrix}
		(\alpha^{-1}\beta(k^{\prime}))^{-1}\\ k^{\prime}
		\end{pmatrix}. \]
		Since $\theta$ is invertible, $k = k^{\prime}$. This shows that $\det_{K}(\theta)$ is injective.
		
		Now, let $k^{\prime\prime}\in K$ be any element. Since $\theta$ is invertible, we have $hk\in G$ such that $\theta(hk) = k^{\prime\prime}$. Then $\alpha(h)(\gamma(h)\cdot \beta(k))\gamma(h)^{\beta(k)}\delta(k) = k^{\prime\prime}$. Using the uniqueness of representation, we have $\alpha(h)(\gamma(h)\cdot \beta(k)) = 1$ and $\gamma(h)^{\beta(k)}\delta(k) = k^{\prime\prime}$. This implies that $\alpha(h\alpha^{-1}\beta(k)) = 1$. Since $\alpha$ is invertible, $h\alpha^{-1}\beta(k) = 1$ and so, $h = (\alpha^{-1}\beta(k))^{-1}$. Using $\gamma(h)^{\beta(k)}\delta(k) = k^{\prime\prime}$, we get $k^{\prime\prime} = \gamma((\alpha^{-1}\beta(k))^{-1})^{\alpha(\alpha^{-1}\beta(k))}\delta(k) = \gamma(\alpha^{-1}\beta(k))^{-1}\delta(k) = (-\gamma\alpha^{-1}\beta + \delta)(k)$. Thus $\det_{K}(\theta)$ is surjective. Hence $\det_{K}(\theta)$ is invertible. The converse holds using Theorem \ref{t31}.
	\end{proof}
	\begin{theorem}\label{t33}
		Let $G = H\bowtie K$ and $\theta = \begin{pmatrix}
		\alpha & \beta\\ \gamma & \delta
		\end{pmatrix}\in End(G)$ such that $\delta$ is invertible. Then $\theta$ is invertible if and only if $\det_{H}(\theta)$ is invertible.  
	\end{theorem}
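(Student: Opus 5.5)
The plan mirrors the proof of Theorem \ref{t32}, with the roles of $H$ and $K$ interchanged. The converse direction is immediate. If $\delta$ and $\Delta_{H}=\det_{H}(\theta)=\alpha-\beta\delta^{-1}\gamma$ are both invertible, then Theorem \ref{t31}$(ii)$ gives that $\theta$ is invertible. So the work lies in showing that if $\theta$ is invertible, then $\Delta_{H}$ is a bijection of $H$.

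The key computation is the image of the element $h\,(\delta^{-1}\gamma(h))^{-1}\in G$ under $\theta$. Write $k_{0}=\delta^{-1}\gamma(h)$. By Theorem \ref{t1}, $\theta(hk)=\alpha(h)\gamma(h)\beta(k)\delta(k)$. With $k=k_{0}^{-1}$ this becomes
\[\theta(hk_{0}^{-1})=\alpha(h)\gamma(h)\,\theta(k_{0}^{-1})=\alpha(h)\gamma(h)\,\theta(k_{0})^{-1}=\alpha(h)\gamma(h)\,\delta(k_{0})^{-1}\beta(k_{0})^{-1}.\]
Since $\delta(k_{0})=\gamma(h)$, the $K$-parts cancel and we get
\[\theta\bigl(h(\delta^{-1}\gamma(h))^{-1}\bigr)=\alpha(h)\bigl(\beta\delta^{-1}\gamma(h)\bigr)^{-1}=(\alpha-\beta\delta^{-1}\gamma)(h)=\Delta_{H}(h).\]
This lies in $H$, and using $\theta$ as a homomorphism avoids expanding $(A_{3})$ and $(A_{4})$ by hand. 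Consequently, $\Delta_{H}=\theta\circ\psi$, where $\psi:H\to G$ is the map $h\mapsto h(\delta^{-1}\gamma(h))^{-1}$.

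Injectivity then follows directly. The map $\psi$ is injective by uniqueness of the factorization $G=HK$, since the $H$-component of $\psi(h)$ is $h$. As $\theta$ is injective, $\Delta_{H}$ is injective as well.

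For surjectivity, take $h''\in H$. Since $\theta$ is surjective, there is $hk\in G$ with $\theta(hk)=h''$. Expanding $\theta(hk)$ as in the proof of Theorem \ref{t1} and comparing the $K$-components by uniqueness of representation gives
\[\gamma(h)^{\beta(k)}\delta(k)=1.\]
The main obstacle is to extract $k=(\delta^{-1}\gamma(h))^{-1}$ from this equation. In Theorem \ref{t32} the analogous step used $(A_{1})$ with $\alpha$ invertible. Here I would instead use condition $(iv)$: applying it to the product $k^{-1}k$ gives $1=\delta(k^{-1})^{\beta(k)}\delta(k)$. Comparing this with the previous display yields $\gamma(h)^{\beta(k)}=\delta(k^{-1})^{\beta(k)}$.

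Next, condition $(C6)$ shows that $x\mapsto x^{\beta(k)}$ is a bijection of $K$, with inverse $x\mapsto x^{\beta(k)^{-1}}$. Hence $\gamma(h)=\delta(k^{-1})$. Since $\delta$ is invertible, this gives $k^{-1}=\delta^{-1}\gamma(h)$, that is, $hk=\psi(h)$. Therefore $h''=\theta(\psi(h))=\Delta_{H}(h)$, so $\Delta_{H}$ is surjective and hence invertible, which completes the proof.
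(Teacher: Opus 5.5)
Your proof is correct and follows the same route as the paper, whose proof of this theorem just says to repeat the proof of Theorem \ref{t32} with the roles of $H$ and $K$ interchanged. That proof consists of three steps, which you carry out in the same way:
\begin{itemize}
\item For injectivity, realize $\det_{H}(\theta)$ as $\theta$ evaluated at $h(\delta^{-1}\gamma(h))^{-1}$.
\item For surjectivity, take the $K$-component of a preimage and use condition $(iv)$ together with the invertibility of $\delta$.
\item For the converse, invoke Theorem \ref{t31}$(ii)$.
\end{itemize}
The differences are only cosmetic. You use the homomorphism property of $\theta$ instead of expanding via the matrix conditions. You also pass through $k^{-1}k$ and $(C6)$, whereas applying $(iv)$ directly to $\delta^{-1}\gamma(h)\,k$ gives $\delta(\delta^{-1}\gamma(h)\,k)=\gamma(h)^{\beta(k)}\delta(k)=1$ at once.
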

	\begin{proof}
		The proof is on the lines of the proof of Theorem \ref{t32}.
	\end{proof}
%
%
	
	\begin{theorem}\label{t34}
		Let $G = H\bowtie K$ and $\theta = \begin{pmatrix}
		\alpha & \beta \\ \gamma & \delta
		\end{pmatrix}\in \mathcal{M}$ such that both $\alpha$ and $\delta$ are invertible. Then $\Delta_{H} = \det_{H}(\theta)$ is invertible if and only if $\Delta_{K}= \det_{K}(\theta)$ is invertible. Moreover, 
		\begin{align}
		{\Delta_{H}}^{-1} &= \alpha^{-1} - \alpha^{-1}\beta{\Delta_{K}}^{-1}(-\gamma\alpha^{-1}) \label{e5}\\
		\text{and}\hspace{.5cm} {\Delta_{K}}^{-1} &= -\delta^{-1}\gamma{\Delta_{H}}^{-1}(-\beta\delta^{-1})+ \delta^{-1}. \label{e6}
		\end{align}
	\end{theorem}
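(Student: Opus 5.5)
The plan is to pass through invertibility of $\theta$, using Theorems \ref{t32}, \ref{t33} and \ref{t31}, rather than comparing $\Delta_H$ and $\Delta_K$ directly. Since $\theta\in\mathcal{M}$, Theorem \ref{t1} lets us regard $\theta$ as an endomorphism of $G=H\bowtie K$.

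\textbf{Equivalence.} Suppose $\Delta_K$ is invertible. Because $\alpha$ is invertible, Theorem \ref{t32} gives that $\theta$ is invertible. Because $\delta$ is invertible, Theorem \ref{t33} applied to the same $\theta$ then gives that $\Delta_H$ is invertible. The converse is symmetric: if $\Delta_H$ is invertible, Theorem \ref{t33} makes $\theta$ invertible, and Theorem \ref{t32} then makes $\Delta_K$ invertible. So both determinants are invertible exactly when $\theta$ is.

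\textbf{Formulas.} Assume now that both $\Delta_H$ and $\Delta_K$ are invertible. Theorem \ref{t31}$(i)$ gives an explicit matrix for $\theta^{-1}$ whose $(1,1)$ entry is $\alpha^{-1}-\alpha^{-1}\beta\Delta_K^{-1}(-\gamma\alpha^{-1})$ and whose $(2,2)$ entry is $\Delta_K^{-1}$. Theorem \ref{t31}$(ii)$ gives another matrix for $\theta^{-1}$ whose $(1,1)$ entry is $\Delta_H^{-1}$ and whose $(2,2)$ entry is $-\delta^{-1}\gamma\Delta_H^{-1}(-\beta\delta^{-1})+\delta^{-1}$. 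The inverse of $\theta$ in $End(G)$ is unique, and by Theorem \ref{t1} the correspondence with $\mathcal{M}$ is a bijection. Hence the two matrices coincide entrywise, and in particular the component maps $H\to H$ and $K\to K$ agree. Equating the $(1,1)$ entries gives (\ref{e5}), and equating the $(2,2)$ entries gives (\ref{e6}).

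\textbf{Main obstacle.} The delicate point is that Theorem \ref{t31} verifies only the one-sided identity $\theta\theta'=1$ in $\mathcal{M}$. To identify $\theta'$ with $\theta^{-1}$, I would note that $\theta$ is already known to be bijective from the first step. In the monoid $End(G)\cong\mathcal{M}$, a one-sided inverse of a bijective element is automatically the two-sided inverse: if $\theta\theta'=1$, then multiplying on the left by $\theta^{-1}$ gives $\theta'=\theta^{-1}$. Both matrices from Theorem \ref{t31} therefore equal $\theta^{-1}$, which justifies the entrywise comparison.
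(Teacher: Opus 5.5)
Your proof is correct, but it takes a different route from the paper's proof of this theorem.

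\textbf{The paper's route.} The paper works directly with the two determinants. Assuming $\Delta_K$ is invertible, for each $h\in H$ it sets $k=\Delta_K^{-1}\big((\gamma\alpha^{-1}(h))^{-1}\big)$. It then verifies, by a long computation with conditions $(i)$--$(vi)$ and the matched-pair axioms, that $\Delta_H\big(\alpha^{-1}(h)(\alpha^{-1}\beta(k))^{-1}\big)=h$, and reads off \eqref{e5}. The converse and \eqref{e6} are left to ``similar arguments''.

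\textbf{Your route.} You go through $\theta$ instead. Theorems~\ref{t32} and~\ref{t33} show that each determinant is invertible exactly when $\theta$ is. The two descriptions of $\theta^{-1}$ in Theorem~\ref{t31} must then coincide entrywise, so the $(1,1)$ and $(2,2)$ entries give \eqref{e5} and \eqref{e6} at once. This is the same device the paper uses later, in the proof of Theorem~\ref{t36}.

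\textbf{Comparison.} Your version is shorter and symmetric, and it handles both implications and both formulas together. It also gets injectivity of $\Delta_H$ from the forward direction of Theorem~\ref{t33}. The paper's computation only exhibits a right inverse of $\Delta_H$ and does not address injectivity. In exchange, the paper's version does not depend on Theorems~\ref{t31}--\ref{t33}, and it shows concretely where the formula comes from.

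\textbf{On your ``main obstacle'' paragraph.} The bijectivity of $\theta$ you invoke there comes from the converse of Theorem~\ref{t32}. The paper proves that converse only by citing Theorem~\ref{t31}, and the proof of Theorem~\ref{t31} is exactly the one-sided check $\theta\theta'=1$ that worried you. So the paragraph is circular as an independent justification. You have two options:
\begin{itemize}
\item accept Theorem~\ref{t31} as stated, in which case the paragraph is unnecessary; or
\item prove injectivity of $\theta$ directly, which takes one line.
\end{itemize}
For the direct proof, put $h'=h\,\alpha^{-1}\beta(k)$. As in the proof of Theorem~\ref{t32}, $\theta\big((\alpha^{-1}\beta(k))^{-1}k\big)=\Delta_K(k)$, so $\theta(hk)=\alpha(h')\,\gamma(h')\Delta_K(k)$. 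Hence $\theta(hk)=1$ forces $\alpha(h')=1$, so $h'=1$. Then $\Delta_K(k)=1$, so $k=1$ and $h=1$.
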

	\begin{proof}
		Let both $\alpha$ and $\delta$ are invertible. First, let $\Delta_{K}$ be invertible. Then we prove that $\Delta_{H}$ is invertible and Equation (\ref{e5}) holds. Let $h\in H$. Then $\gamma\alpha^{-1}(h)\in K$. Since $\Delta_{K}$ is invertible, there exists $k\in K$ such that $\Delta_{K}(k) = (\gamma\alpha^{-1}(h))^{-1}$. This implies that $\gamma\alpha^{-1}\beta(k) = \delta(k)\gamma\alpha^{-1}(h)$. Now,
		\begin{align*}
		\Delta_{H}(\alpha^{-1}(h)(\alpha^{-1}\beta(k))^{-1}) &= (\alpha-\beta\delta^{-1}\gamma)(\alpha^{-1}(h)(\alpha^{-1}\beta(k))^{-1})\\
		&= \alpha(\alpha^{-1}(h)(\alpha^{-1}\beta(k))^{-1})(\beta\delta^{-1}\gamma(\alpha^{-1}(h)(\alpha^{-1}\beta(k))^{-1}))^{-1}\\
		&= \alpha(\alpha^{-1}(h))(\gamma\alpha^{-1}(h)\cdot \alpha((\alpha^{-1}\beta(k))^{-1}))(\beta\delta^{-1}(\gamma\alpha^{-1}(h)^{\alpha((\alpha^{-1}\beta(k))^{-1})}\\
		&~~~~\gamma((\alpha^{-1}\beta(k))^{-1})))^{-1}\\
		&= h(\delta(k)^{-1}\gamma\alpha^{-1}\beta(k)\cdot \alpha((\alpha^{-1}\beta(k))^{-1}))(\beta\delta^{-1}((\delta(k)^{-1}\gamma\alpha^{-1}\beta(k))^{\alpha((\alpha^{-1}\beta(k))^{-1})}\\
		&~~~~\gamma((\alpha^{-1}\beta(k))^{-1})))^{-1}\\
		&= h(\delta(k)^{-1}\cdot (\gamma\alpha^{-1}\beta(k)\cdot \alpha((\alpha^{-1}\beta(k))^{-1})))(\beta\delta^{-1}((\delta(k)^{-1})^{\gamma\alpha^{-1}\beta(k)\cdot \alpha((\alpha^{-1}\beta(k))^{-1})}\\
		&~~~~\gamma\alpha^{-1}\beta(k)^{ \alpha((\alpha^{-1}\beta(k))^{-1})}\gamma((\alpha^{-1}\beta(k))^{-1})))^{-1}\\
		&= h(\delta(k)^{-1}\cdot (\alpha(\alpha^{-1}\beta(k)))^{-1})(\beta\delta^{-1}((\delta(k)^{-1})^{(\alpha(\alpha^{-1}\beta(k)))^{-1}}\\
		&~~~~\gamma(\alpha^{-1}\beta(k)(\alpha^{-1}\beta(k))^{-1})))^{-1}\\
		&= h(\delta(k)^{-1}\cdot \beta(k)^{-1})(\beta\delta^{-1}((\delta(k^{-1})^{\beta(k)})^{\beta(k)^{-1}}))^{-1}\\
		&= h(\delta(k)^{-1}\cdot(\delta(k)\cdot \beta(k^{-1})))\beta(k^{-1})^{-1}\\
		&= h \beta(k^{-1})\beta(k^{-1})^{-1}\\
		&= h
		\end{align*}
		
		Thus for $h\in H$, there exists $\alpha^{-1}(h)(\alpha^{-1}\beta{\Delta_{K}}^{-1}(\gamma\alpha^{-1}(h)^{-1}))^{-1}) \in H$ such that $$\Delta_{H}(\alpha^{-1}(h)(\alpha^{-1} \beta{\Delta_{K}}^{-1}(\gamma \alpha^{-1}(h)^{-1}))^{-1}) = h.$$ Therefore, ${\Delta_{H}}^{-1}(h) = (\alpha^{-1} - \alpha^{-1}\beta{\Delta_{K}}^{-1}(-\gamma\alpha^{-1}))(h)$. Hence ${\Delta_{H}}^{-1} = \alpha^{-1}-\alpha^{-1}\beta{\Delta_{K}}^{-1}(-\gamma\alpha^{-1})$. 	
		
		Using the similar arguments, one can easily prove Equation (\ref{e6}).
		
%
	\end{proof}
	Using Theorem \ref{t34}, we can define the determinant of an endomorphism of the group $G= H\bowtie K$ as follows: Let $G = H\bowtie K$ and $\theta \in \mathcal{A}$ such that both $\alpha$ and $\delta$ are invertible. Then we define the determinant of $\theta$ to be the map $\alpha - \beta\delta^{-1}\gamma$ and denote it by $\det(\theta)$.

	\begin{theorem}\label{t36}
		Let $G = H\bowtie K$ and $\theta= \begin{pmatrix}
		\alpha & \beta\\ \gamma & \delta
		\end{pmatrix}\in \mathcal{A}$ such that both $\alpha$ and $\delta$ are invertible. If $\Delta = \det(\theta)$ is invertible, then 
		\[\theta^{-1} = \begin{pmatrix}
		{\Delta}^{-1} & {\Delta}^{-1}(-\beta\delta^{-1})\\
		-\delta^{-1}\gamma{\Delta}^{-1}& -\delta^{-1}\gamma{\Delta}^{-1}(-\beta\delta^{-1}) + \delta^{-1}
		\end{pmatrix}.\]
		Moreover, $\theta^{-1}\in \mathcal{A}$ and $\det(\theta^{-1}) = \alpha^{-1}$.
	\end{theorem}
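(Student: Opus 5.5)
This is essentially Theorem \ref{t31}$(ii)$ combined with the observation that $\det(\theta)$ is by definition $\det_{H}(\theta)$. So the plan is to reduce to that result rather than redo the matrix multiplication.

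Since $\delta$ is invertible and $\Delta = \det(\theta) = \alpha - \beta\delta^{-1}\gamma = \det_{H}(\theta)$ is invertible, Theorem \ref{t31}$(ii)$ applies verbatim. It gives that $\theta$ is invertible in the monoid $\mathcal{M}$, and that its inverse is exactly the displayed matrix. It also gives $\det_{K}(\theta^{-1}) = \delta^{-1}$. Alternatively, by Theorem \ref{t34} the map $\Delta_K$ is also invertible. Then Theorem \ref{t31}$(i)$ gives a second expression for the inverse, and the two agree by uniqueness of inverses in a monoid, via Equations (\ref{e5}) and (\ref{e6}).

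Next I will show that $\theta^{-1}\in\mathcal{A}$. By Theorem \ref{t1}, $\theta^{-1}$ corresponds to an endomorphism of $G$ that is the inverse of the automorphism corresponding to $\theta$, so it is an automorphism. The group $\mathcal{A}$ is the group of units of $\mathcal{M}$, i.e.\ it corresponds to $Aut(G)$. Concretely, $(A_1)$--$(A_6)$ hold because $\theta^{-1}\in\mathcal{M}$. Condition $(A_7)$ holds because the associated endomorphism is bijective and $G=HK$ has unique factorization.

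Finally I will compute $\det(\theta^{-1})$. To even define it, both diagonal entries of $\theta^{-1}$ must be invertible.
\begin{itemize}
\item The $(1,1)$ entry is $\Delta^{-1}$, which is invertible.
\item The $(2,2)$ entry is $-\delta^{-1}\gamma\Delta^{-1}(-\beta\delta^{-1})+\delta^{-1}$. By Equation (\ref{e6}) this equals $\Delta_K^{-1}$, which is invertible by Theorem \ref{t34}.
\end{itemize}
So $\det(\theta^{-1}) = \det_H(\theta^{-1})$ is defined. Theorem \ref{t31}$(i)$ applied to $\theta$ (now legitimate, since $\alpha$ and $\Delta_K$ are invertible) states that $\det_{H}(\theta^{-1}) = \alpha^{-1}$. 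Equivalently, one can compute directly
\[
\Delta^{-1} - \Delta^{-1}(-\beta\delta^{-1})\,\Delta_K\,(-\delta^{-1}\gamma\Delta^{-1}),
\]
mirroring the final computation in the proof of Theorem \ref{t31}.

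The main obstacle is this last identification. The formula for $\theta^{-1}$ comes from part $(ii)$, while the value $\det_H(\theta^{-1})=\alpha^{-1}$ comes from part $(i)$. So I must make sure both descriptions are the same matrix. I would handle this by citing uniqueness of two-sided inverses in $\mathcal{M}$, rather than by manipulating the non-distributive "sums" of maps directly. Those manipulations are delicate because $+$ is not commutative and composition distributes over $+$ only on one side.
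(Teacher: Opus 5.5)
Your proposal is correct and follows essentially the same route as the paper. You take the inverse formula from Theorem \ref{t31}$(ii)$, get invertibility of $\Delta_K$ from Theorem \ref{t34}, and compare the two inverse formulas of Theorem \ref{t31} to identify the $(2,2)$ entry with $\Delta_K^{-1}$, justified by uniqueness of the inverse of the automorphism $\theta$; the only difference is that you read off $\det(\theta^{-1})=\alpha^{-1}$ directly from Theorem \ref{t31}$(i)$, while the paper phrases that step as an appeal to $\theta=(\theta^{-1})^{-1}$.
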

	\begin{proof}
		Let $\Delta_{H}$ be invertible. Then the formula for $\theta^{-1}$ can be obtained directly from Theorem $\ref{t31}$. Using Theorem \ref{t34}, we have $\det_{K}(\theta)$ is invertible. Also, using the two formulas for $\theta^{-1}$ in Theorem \ref{t31}, we get ${\det_{K}}^{-1}(\theta) = \delta^{-1}\gamma{{\Delta}_{H}}^{-1}(-\beta\delta^{-1})+ \delta^{-1}$. Thus $\theta^{-1}\in \mathcal{A}$. Further, using $\theta = (\theta^{-1})^{-1}$, we get $\det_{H}(\theta^{-1}) = \alpha^{-1}$.
	\end{proof}
	
	By Theorem \ref{t34}, we have ${\Delta_{K}}^{-1} = -\delta^{-1}\gamma{{\Delta}_{H}}^{-1}(-\beta\delta^{-1})+ \delta^{-1}$. This gives an elegant form for the inverse of an automorphisms in $\mathcal{A}$.
	
	\begin{theorem}
		Let $G = H\rtimes K$ and $\theta= \begin{pmatrix}
		\alpha & \beta\\ \gamma & \delta
		\end{pmatrix}\in \mathcal{M}$ such that both $\alpha$ and $\delta$ are invertible. Then, if $\det_{H}(\theta)$(or $\det_{K}(\theta)$) is invertible, then 
		\[\theta^{-1} = \begin{pmatrix}
		{\Delta_{H}}^{-1} & -\alpha^{-1}\beta{\Delta_{K}}^{-1}\\
		-\delta^{-1}\gamma{\Delta_{H}}^{-1}& {\Delta_{K}}^{-1}
		\end{pmatrix}.\]
	\end{theorem}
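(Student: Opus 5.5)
Both $\alpha$ and $\delta$ are invertible, so Theorem \ref{t34} applies: $\Delta_{H}$ is invertible exactly when $\Delta_{K}$ is. Hence, whichever of the two is assumed invertible, both are, and we may use both parts of Theorem \ref{t31}. (We read $H\rtimes K$ in the statement as the general product $H\bowtie K$ of the section; nothing below depends on either factor being normal.) Part $(ii)$ of Theorem \ref{t31} gives
\[\theta^{-1} = \begin{pmatrix} \Delta_{H}^{-1} & \Delta_{H}^{-1}(-\beta\delta^{-1})\\ -\delta^{-1}\gamma\Delta_{H}^{-1} & -\delta^{-1}\gamma\Delta_{H}^{-1}(-\beta\delta^{-1})+\delta^{-1}\end{pmatrix},\]
and part $(i)$ gives
\[\theta^{-1} = \begin{pmatrix} \alpha^{-1}-\alpha^{-1}\beta\Delta_{K}^{-1}(-\gamma\alpha^{-1}) & -\alpha^{-1}\beta\Delta_{K}^{-1}\\ \Delta_{K}^{-1}(-\gamma\alpha^{-1}) & \Delta_{K}^{-1}\end{pmatrix}.\]

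By Theorem \ref{t1}, an endomorphism determines its associated maps uniquely, so these two matrices are equal entrywise. Taking the $(1,1)$ and $(2,1)$ entries from the first matrix and the $(1,2)$ and $(2,2)$ entries from the second gives exactly the claimed form
\[\theta^{-1} = \begin{pmatrix} \Delta_{H}^{-1} & -\alpha^{-1}\beta\Delta_{K}^{-1}\\ -\delta^{-1}\gamma\Delta_{H}^{-1} & \Delta_{K}^{-1}\end{pmatrix}.\]
As consistency checks, equating the $(1,1)$ entries recovers Equation (\ref{e5}), and equating the $(2,2)$ entries recovers Equation (\ref{e6}).

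The only delicate point is the uniqueness step. Theorem \ref{t31} exhibits each matrix as a two-sided inverse of $\theta$ in the monoid $\mathcal{M}\simeq End(G)$, and an inverse in a monoid is unique, so both matrices correspond to the same map $\theta^{-1}$. Because $T$ in Theorem \ref{t1} is a bijection, the maps $\theta^{-1}(h)=\alpha'(h)\gamma'(h)$ and $\theta^{-1}(k)=\beta'(k)\delta'(k)$ determine $\alpha',\beta',\gamma',\delta'$ through the unique factorization $G=HK$, which justifies the entrywise comparison.

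I also need to confirm that the proof of Theorem \ref{t31} verifies the matrix is an inverse on both sides. If it only checks one side, I would check the other composite directly using Equation (\ref{e1}) in the same way, or argue that a one-sided inverse of an invertible element of a monoid is automatically two-sided, since $\theta$ is already known to be invertible from Theorem \ref{t31}.
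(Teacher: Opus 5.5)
Your proposal is correct and is essentially the paper's argument. The paper gives no separate proof, but, as in the proof of Theorem \ref{t36} and the remark just before the statement, it uses Theorem \ref{t34} to make both determinants invertible and then combines the two inverse formulas of Theorem \ref{t31}, exactly as you do by matching entries. Your caution about one-sidedness is fair, since the proof of Theorem \ref{t31} only checks that the candidate matrix is a right inverse of $\theta$. Even so, the statement of Theorem \ref{t31} asserts that $\theta$ is invertible with that inverse, and together with uniqueness of inverses in $\mathcal{M}$ this already justifies the entrywise comparison.
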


\section{Another description of $Aut(G)$}
In this section, we will study two equivalent subsets of $\mathcal{M}$ associated with the subgroups $H$ and $K$ of the group $G = H\bowtie K$, and consists of all the elements of $\mathcal{M}$ whose determinants are defined and invertible. 

\begin{definition}
	Let $\mathcal{M}$ be defined as above. Then we define a set  consisting of all the elements of $\mathcal{M}$ whose determinats are defined and invertible, as 
	\[\mathcal{G}_{H} = \{\theta\in \mathcal{M} \mid {\det}_{H}(\theta)\; \text{is defined and invertible}\}. \]
	or equivalently 
	\[\mathcal{G}_{K} = \{\theta\in \mathcal{M} \mid {\det}_{K}(\theta)\; \text{is defined and invertible}\}. \]
\end{definition}
%
	\begin{remark}\label{rem41}
	Let $\theta \in {\mathcal{G}}_{H}$. Then $\det_{H}(\theta)$ is defined and invertible. Therefore, using Theorem \ref{t33}, we have $\theta$ is invertible. Thus $\theta$ is an automorphism of $G$. Hence $\theta\in \mathcal{A}$ and so, ${\mathcal{G}}_{H}\subseteq \mathcal{A}$.
\end{remark}

	Using Example \ref{ex1}, one can easily observe that in general, $\mathcal{G}_{H}\ne \mathcal{A}$. In the following proposition, we will prove that the $\mathcal{G}_{H}$ is corresponds to a subset of $Aut(G)$. 

	\begin{proposition}\label{prop41}
	Let $G = H \bowtie K$. Then $\mathcal{G}_{H}$ is equivalent to a subset of $Aut(G)$.		
\end{proposition}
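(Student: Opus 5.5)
The plan is to use the monoid isomorphism $T\colon End(G)\to\mathcal{M}$ of Theorem \ref{t1}. Under $T$, Remark \ref{rem41} already gives $\mathcal{G}_{H}\subseteq\mathcal{A}$, so the content of Proposition \ref{prop41} is to identify the preimage $T^{-1}(\mathcal{G}_{H})$ explicitly inside $Aut(G)$. The natural candidate is
\[\mathcal{S}_{H}=\{\theta\in Aut(G) \mid \text{the $K$--$K$ component $\delta$ of $T(\theta)$ is bijective}\}.\]
I would show $T(\mathcal{S}_{H})=\mathcal{G}_{H}$. The analogous statement for $\mathcal{G}_{K}$ uses $\alpha$ and Theorem \ref{t32} in place of $\delta$ and Theorem \ref{t33}.

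\textbf{Inclusion $T(\mathcal{S}_{H})\subseteq\mathcal{G}_{H}$.} Take $\theta\in\mathcal{S}_{H}$ with $T(\theta)=\begin{pmatrix}\alpha&\beta\\ \gamma&\delta\end{pmatrix}$. By Lemma \ref{lem2.1}, this matrix lies in $\mathcal{M}$. Since $\delta$ is invertible, $\det_{H}(\theta)=\alpha-\beta\delta^{-1}\gamma$ is defined. Since $\theta$ is an automorphism, Theorem \ref{t33} (the ``only if'' direction) shows that $\det_{H}(\theta)$ is invertible. Hence $T(\theta)\in\mathcal{G}_{H}$.

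\textbf{Inclusion $\mathcal{G}_{H}\subseteq T(\mathcal{S}_{H})$.} Take $M\in\mathcal{G}_{H}$. By surjectivity of $T$, $M=T(\theta)$ for a unique $\theta\in End(G)$. Because $\det_H(M)$ is defined, $\delta$ is invertible. Theorem \ref{t31}(ii), or equivalently Theorem \ref{t33}, then shows that $\theta$ is invertible, with the explicit inverse given there, so $\theta\in Aut(G)$. Thus $\theta\in\mathcal{S}_{H}$. Injectivity of $T$ makes the correspondence $\mathcal{S}_{H}\leftrightarrow\mathcal{G}_{H}$ a bijection. This is the sense in which $\mathcal{G}_{H}$ is ``equivalent'' to the subset $\mathcal{S}_{H}\subseteq Aut(G)$.

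\textbf{Main obstacle.} The only delicate point is the precise meaning of ``equivalent''. If the intended meaning is only a bijection with a subset, the argument above is complete, since $T$ restricted to $\mathcal{G}_{H}$ is injective into $Aut(G)$. The proof then essentially reduces to combining Remark \ref{rem41} with the injectivity of $T$.

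If instead one wants $\mathcal{S}_{H}$ to carry some structure, such as closure under inverses, I would use Theorem \ref{t31}(ii). It gives $\det_{K}(\theta^{-1})=\delta^{-1}$, so the $K$--$K$ entry of $\theta^{-1}$ is $-\delta^{-1}\gamma\Delta_{H}^{-1}(-\beta\delta^{-1})+\delta^{-1}$. I expect that showing this entry is bijective in general would require the extra hypothesis that $\alpha$ is invertible, via Theorem \ref{t34}. So I would not claim a subgroup, only the set correspondence.
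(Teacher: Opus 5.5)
Your argument is correct and is essentially the paper's proof. The paper combines Remark~\ref{rem41} (elements of $\mathcal{G}_{H}$ are invertible by Theorem~\ref{t33}, so $\mathcal{G}_{H}\subseteq\mathcal{A}$) with the correspondence $\mathcal{A}\simeq Aut(G)$, which is exactly your second inclusion together with injectivity of $T$; your extra inclusion $T(\mathcal{S}_{H})\subseteq\mathcal{G}_{H}$, via the forward direction of Theorem~\ref{t33}, is a valid sharpening that identifies the image as precisely the automorphisms whose $K$--$K$ component is bijective, which the paper does not need.
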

\begin{proof}
	The proof follows directly from Remark \ref{rem41} and the fact that $\mathcal{A} \simeq Aut(G)$.
\end{proof}
\begin{proposition}\label{pr41}
	Let $\mathcal{G}_{H}$ be defined as above. If for all $\begin{pmatrix}
	\alpha & \beta \\ \gamma & \delta
	\end{pmatrix}\in \mathcal{A}$, the map $\delta$ is invertible, then $\mathcal{G}_{H}\le \mathcal{A}$.
\end{proposition}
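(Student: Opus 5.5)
Under the stated hypothesis, I will show that $\mathcal{G}_{H}$ coincides with $\mathcal{A}$ as a set. Then $\mathcal{G}_{H}\le \mathcal{A}$ is immediate, since $\mathcal{A}$ is a group in the monoid $\mathcal{M}$. By Remark \ref{rem41}, $\mathcal{G}_{H}\subseteq \mathcal{A}$ already holds without any hypothesis, so only the reverse inclusion needs an argument.

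Let $\theta=\begin{pmatrix}\alpha & \beta\\ \gamma & \delta\end{pmatrix}\in \mathcal{A}$. By hypothesis $\delta$ is invertible, so $\det_{H}(\theta)=\alpha-\beta\delta^{-1}\gamma$ is defined. The element $\theta$ lies in $\mathcal{A}$, which corresponds to $Aut(G)$ through the isomorphism $T$ of Theorem \ref{t1}. Hence $\theta$ is an invertible endomorphism of $G$ whose $\delta$-entry is invertible. Theorem \ref{t33} (the ``only if'' direction) then says that $\det_{H}(\theta)$ is invertible. Therefore $\theta\in\mathcal{G}_{H}$, which gives $\mathcal{A}\subseteq\mathcal{G}_{H}$.

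If one prefers to check the subgroup axioms directly, there are three things to verify. The identity matrix lies in $\mathcal{G}_{H}$, because its $H$-determinant is $1$. Closure under products holds because a product of elements of $\mathcal{A}$ is again in $\mathcal{A}$, so its $\delta$-entry is invertible by hypothesis, and the argument of the previous paragraph applies. Closure under inverses uses Theorem \ref{t31}$(ii)$: it gives $\det_{K}(\theta^{-1})=\delta^{-1}$, and it shows that $\theta^{-1}$ lies in $\mathcal{A}$, so its lower-right entry is invertible by hypothesis and the same reasoning gives $\theta^{-1}\in\mathcal{G}_{H}$.

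The only delicate step is the ``only if'' part of Theorem \ref{t33}, which the paper states as parallel to Theorem \ref{t32}. I rely on it as given. If it were not available, I would re-prove injectivity and surjectivity of $\alpha-\beta\delta^{-1}\gamma$ from the bijectivity of $\theta$. For this I would evaluate $\theta$ on the elements $h\,(\delta^{-1}\gamma(h))^{-1}$, mirroring the computation in the proof of Theorem \ref{t32} with the roles of $H$ and $K$ interchanged.
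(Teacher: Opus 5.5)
Your proof is correct and rests on the same ingredient as the paper's: the ``only if'' direction of Theorem \ref{t33}, applied to elements of $\mathcal{A}$, whose $\delta$-entry is invertible by hypothesis. The paper checks closure under products and inverses directly, as in your second paragraph, whereas your main argument proves $\mathcal{A}\subseteq\mathcal{G}_{H}$ outright; that is exactly the paper's Corollary \ref{cor1}, and it makes those closure checks redundant.
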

\begin{proof}
	Let $\theta$, $\theta^{\prime}\in \mathcal{G}_{H}$. Then both $\det_{H}(\theta)$ and $\det_{H}(\theta^{\prime})$ are defined and invertible. Therefore, by Theorem \ref{t33},  both $\theta$ and $\theta^{\prime}$ are invertible. This implies that $\theta^{\prime}\theta\in \mathcal{A}$ and consequently $\theta^{\prime}\theta \in \mathcal{A}$. Now, using the hypothesis and Theorem \ref{t33}, we get $\det_{H}(\theta^{\prime}\theta)$ is defined and invertible. Thus $\theta^{\prime}\theta\in \mathcal{G}_{H}$ for all $\theta^{\prime}, \theta \in \mathcal{G}_{H}$. 
	
	Also, note that for any $\theta = \begin{pmatrix}
	\alpha & \beta \\ \gamma & \delta
	\end{pmatrix} \in \mathcal{G}_{H}$, $\det_{H}(\theta)$ is defined and invertible. Then Theorem \ref{t33} implies that $\theta$ is invertible. Therefore, $\theta\in \mathcal{A}$ and so $\theta^{-1}\in \mathcal{A}$. Again, using the hypothesis and Theorem \ref{t33}, we have $\theta^{-1}\in \mathcal{G}_{H}$. Hence $\mathcal{G}_{H}\le \mathcal{A}$.
\end{proof}

\begin{corollary}\label{cor1}
	Let $G = H \bowtie K$. If for all $\begin{pmatrix}
	\alpha & \beta \\ \gamma & \delta
	\end{pmatrix}\in \mathcal{A}$, the map $\delta$ is invertible, then $\mathcal{G}_{H} = \mathcal{A}$. Moreover, $\mathcal{G}_{H} \simeq Aut(G)$.
\end{corollary}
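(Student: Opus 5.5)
We prove the two inclusions $\mathcal{G}_{H}\subseteq \mathcal{A}$ and $\mathcal{A}\subseteq \mathcal{G}_{H}$, and then transport the equality through the identification of $\mathcal{A}$ with $Aut(G)$. The first inclusion is exactly Remark \ref{rem41}: if $\theta\in\mathcal{G}_{H}$, then $\delta$ is invertible and $\det_{H}(\theta)$ is invertible. Theorem \ref{t33} then shows that $\theta$ is an automorphism, so $\theta\in\mathcal{A}$. This direction does not use the hypothesis.

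For the reverse inclusion, take any $\theta=\begin{pmatrix}\alpha & \beta\\ \gamma & \delta\end{pmatrix}\in\mathcal{A}$. By hypothesis $\delta$ is invertible, so $\det_{H}(\theta)=\alpha-\beta\delta^{-1}\gamma$ is defined. Since $\theta\in\mathcal{A}$, it corresponds to an automorphism of $G$; concretely, condition $(A_7)$ says that $hk\mapsto \alpha(h)(\gamma(h)\cdot\beta(k))\,\gamma(h)^{\beta(k)}\delta(k)$ is a bijection of $G$, which is $\theta$ written in normal form. So $\theta$ is invertible in $End(G)$, and the forward direction of Theorem \ref{t33} gives that $\det_{H}(\theta)$ is invertible. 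Hence $\theta\in\mathcal{G}_{H}$, and $\mathcal{A}\subseteq\mathcal{G}_{H}$.

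Combining the two inclusions gives $\mathcal{G}_{H}=\mathcal{A}$. In particular $\mathcal{G}_{H}$ is a group, which is consistent with Proposition \ref{pr41}. For the final claim, Theorem \ref{t1} gives a monoid isomorphism $T\colon End(G)\to\mathcal{M}$. It restricts to an isomorphism between the unit groups, and this is the correspondence $Aut(G)\simeq\mathcal{A}$ used in Proposition \ref{prop41}. Therefore $\mathcal{G}_{H}=\mathcal{A}\simeq Aut(G)$.

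The one point that needs care is the forward direction of Theorem \ref{t33}. It is only asserted to follow the lines of Theorem \ref{t32}, so I would check the analogue directly. For injectivity of $\Delta_{H}$, compute $\theta$ applied to the element $h\,(\delta^{-1}\gamma(h))^{-1}$ (written as an $H$-part followed by a $K$-part). Use $(iv)$ and the identity $\delta(k)^{\beta(k')}$ to collapse the $K$-component to $1$, and check that the $H$-component is $(\alpha-\beta\delta^{-1}\gamma)(h)$. Injectivity of $\Delta_{H}$ then follows from injectivity of $\theta$. For surjectivity, solve $\theta(hk)=h''$ using uniqueness of the normal form and the invertibility of $\delta$.
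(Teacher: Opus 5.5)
Your proposal is correct and follows the paper's proof: $\mathcal{G}_{H}\subseteq\mathcal{A}$ by Remark \ref{rem41}, $\mathcal{A}\subseteq\mathcal{G}_{H}$ by the hypothesis together with the forward direction of Theorem \ref{t33}, and then $\mathcal{G}_{H}=\mathcal{A}\simeq Aut(G)$ via the correspondence coming from Theorem \ref{t1}. Your extra check of Theorem \ref{t33} fills in what the paper leaves as ``on the lines of Theorem \ref{t32}'', and it is sound: evaluate $\theta$ at $h(\delta^{-1}\gamma(h))^{-1}$, use $(iv)$ to kill the $K$-part and $(iii)$ to get $\det_{H}(\theta)(h)$, and for surjectivity invert $x\mapsto x^{\beta(k)}$ on $K$ via $(C1)$ and $(C6)$.
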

\begin{proof}
	Let $\theta = \begin{pmatrix}
	\alpha &\beta\\ \gamma & \delta
	\end{pmatrix}\in \mathcal{A}$ be such that $\delta$ is invertible. Then using Theorem \ref{t33}, we get $\det_{H}(\theta)$ is defined and invertible. This implies that $\theta\in \mathcal{G}_{H}$ and so, $\mathcal{A} \subseteq \mathcal{G}_{H}$. Thus, using Remark \ref{rem41} and Proposition \ref{prop41}, we get $\mathcal{A} = \mathcal{G}_{H}$ as groups. Hence $\mathcal{G}_{H} \simeq Aut(G)$. 
\end{proof}
Now, combining Propositions \ref{t38}--\ref{pr41} and Corollary \ref{cor1}, we have the following theorem.  
\begin{theorem}\label{t38}
	Let $G = H\bowtie K$. Then $Aut(G)\simeq \mathcal{G}_{H}$ if and only if for all $\begin{pmatrix}
	\alpha & \beta \\ \gamma & \delta
	\end{pmatrix}\in \mathcal{A}$, the map $\delta$ is invertible.		
\end{theorem}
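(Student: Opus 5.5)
We read the statement as the equality $\mathcal{G}_{H} = \mathcal{A}$ of subsets of $\mathcal{M}$. Both $\mathcal{G}_{H}$ and $\mathcal{A}$ live inside $\mathcal{M}$, and $\mathcal{A}\simeq Aut(G)$ via the isomorphism $T$ of Theorem \ref{t1}. So the natural meaning of ``$Aut(G)\simeq \mathcal{G}_{H}$'' is that $\mathcal{G}_{H}$ is all of $\mathcal{A}$, i.e.\ that $T^{-1}$ restricted to $\mathcal{G}_{H}$ maps onto $Aut(G)$. With this reading, the proof splits into the two implications, and each reduces to Remark \ref{rem41}, Theorem \ref{t33} and Corollary \ref{cor1}.

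\emph{Sufficiency.} Assume that $\delta$ is invertible for every $\begin{pmatrix}\alpha & \beta\\ \gamma & \delta\end{pmatrix}\in\mathcal{A}$. This is exactly the hypothesis of Corollary \ref{cor1}, which gives $\mathcal{G}_{H}=\mathcal{A}$ and hence $\mathcal{G}_{H}\simeq Aut(G)$. For completeness, the underlying argument runs as follows. Given $\theta\in\mathcal{A}$, its $\delta$ is invertible, so $\det_{H}(\theta)$ is defined. Since $\theta$ is invertible, Theorem \ref{t33} shows that $\det_{H}(\theta)$ is invertible, so $\mathcal{A}\subseteq\mathcal{G}_{H}$. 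The reverse inclusion $\mathcal{G}_{H}\subseteq\mathcal{A}$ is Remark \ref{rem41}.

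\emph{Necessity.} Assume $Aut(G)\simeq\mathcal{G}_{H}$ in the sense $\mathcal{G}_{H}=\mathcal{A}$. Take any $\theta=\begin{pmatrix}\alpha & \beta\\ \gamma & \delta\end{pmatrix}\in\mathcal{A}$. Then $\theta\in\mathcal{G}_{H}$, so by definition $\det_{H}(\theta)$ is defined. By the definition of the $H$-determinant, being defined requires $\delta$ to be invertible, so $\delta$ is invertible for every element of $\mathcal{A}$, as required.

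The main obstacle is pinning down what ``$\simeq$'' means. If it is read as an abstract isomorphism of sets or groups, necessity can fail. For infinite $G$, a proper subset $\mathcal{G}_{H}\subsetneq\mathcal{A}$ may be equipotent with $\mathcal{A}$ (as in the setting of Example \ref{ex1}), so no contradiction follows from the bijection alone. I would therefore state explicitly, following the usage in Proposition \ref{prop41} and Corollary \ref{cor1}, that the isomorphism is the one induced by $T$, i.e.\ $T^{-1}(\mathcal{G}_{H})=Aut(G)$, which is the same as $\mathcal{G}_{H}=\mathcal{A}$. Under that identification both directions are the immediate arguments above. The only point to check is that membership in $\mathcal{G}_{H}$ genuinely encodes invertibility of $\delta$, which holds because $\det_{H}$ is only defined when $\delta^{-1}$ exists.
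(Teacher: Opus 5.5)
Your proof is correct and follows the paper's route for sufficiency: that direction is exactly Corollary~\ref{cor1}, with Remark~\ref{rem41} giving $\mathcal{G}_H\subseteq\mathcal{A}$ and Theorem~\ref{t33} giving $\mathcal{A}\subseteq\mathcal{G}_H$, which is all the paper's one-line appeal to Propositions~\ref{prop41}--\ref{pr41} and Corollary~\ref{cor1} actually establishes. The paper never argues the converse, so your reading of $\simeq$ as $\mathcal{G}_H=\mathcal{A}$ via $T$, together with the observation that $\det_H(\theta)$ being defined already forces $\delta$ to be invertible, supplies the missing half; your caveat that an abstract bijection would not suffice is also well founded, since in the finitely generated setting of Example~\ref{ex1} both $\mathcal{G}_H$ and $\mathcal{A}$ are countably infinite while $\mathcal{G}_H\neq\mathcal{A}$.
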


	Following the same lines of proofs of Propositions \ref{prop41}--\ref{pr41}, Corollary \ref{cor1} and Theorem \ref{t38}, we have the similar results for $\mathcal{G}_{K}$.

\begin{theorem}\label{th42}
	Let $G = H\bowtie K$. Then
	\begin{itemize}
		\item[$(i)$]  $\mathcal{G}_{K}$ corresponds to a subset of $Aut(G)$.
	\item[$(ii)$] if for all $\begin{pmatrix}
	\alpha & \beta \\ \gamma & \delta
	\end{pmatrix}\in \mathcal{A}$, the map $\alpha$ is invertible, then $\mathcal{G}_{K}\le \mathcal{A}$.		 
	\item[$(iii)$]  if for all $\begin{pmatrix}
	\alpha & \beta \\ \gamma & \delta
	\end{pmatrix}\in \mathcal{M}$, the map $\alpha$ is invertible, then  $Aut(G) \simeq \mathcal{G}_{K}$.
	\item[$(iv)$]  $\mathcal{G}_{K} \simeq Aut(G)$ if and only if for all $\begin{pmatrix}
	\alpha & \beta \\ \gamma & \delta
	\end{pmatrix}\in \mathcal{A}$, the map $\alpha$ is invertible.
	\end{itemize} 
\end{theorem}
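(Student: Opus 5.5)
The plan is to mirror, for $\mathcal{G}_{K}$, the four-step argument given for $\mathcal{G}_{H}$. Theorem \ref{t32} plays the role of Theorem \ref{t33}, and the first part of Theorem \ref{t31} replaces the second. Throughout I identify $\mathcal{A}$ with $Aut(G)$ via Theorem \ref{t1} and the correspondence recalled from \cite{zap}.

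For $(i)$, take $\theta=\begin{pmatrix}\alpha&\beta\\ \gamma&\delta\end{pmatrix}\in\mathcal{G}_{K}$. Then $\alpha$ is invertible and $\det_{K}(\theta)$ is defined and invertible. Theorem \ref{t31}$(i)$ (or Theorem \ref{t32}) shows that $\theta$ is an invertible endomorphism, so $\theta\in\mathcal{A}$. Hence $\mathcal{G}_{K}\subseteq\mathcal{A}\simeq Aut(G)$, which is the analogue of Remark \ref{rem41} and Proposition \ref{prop41}.

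For $(ii)$, assume every element of $\mathcal{A}$ has invertible upper-left entry.
\begin{itemize}
\item[$\bullet$] \emph{Closure under products.} Let $\theta,\theta'\in\mathcal{G}_{K}$. By $(i)$ both lie in $\mathcal{A}$, so $\theta'\theta\in\mathcal{A}$. By hypothesis the $\alpha$-entry of $\theta'\theta$ is invertible, so $\det_{K}(\theta'\theta)$ is defined. Since $\theta'\theta$ is invertible, Theorem \ref{t32} shows that $\det_{K}(\theta'\theta)$ is invertible. Thus $\theta'\theta\in\mathcal{G}_{K}$.
\item[$\bullet$] \emph{Inverses.} The inverse $\theta^{-1}$ lies in $\mathcal{A}$, so the same reasoning puts it in $\mathcal{G}_{K}$.
\item[$\bullet$] \emph{Identity.} The identity lies in $\mathcal{G}_{K}$, since $\det_{K}$ of the identity matrix is $1$.
\end{itemize}
Hence $\mathcal{G}_{K}\le\mathcal{A}$.

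For $(iii)$, the hypothesis is over $\mathcal{M}$, which contains $\mathcal{A}$, so it implies the hypothesis of $(ii)$. Any $\theta\in\mathcal{A}$ then has $\alpha$ invertible and is invertible, so Theorem \ref{t32} gives $\theta\in\mathcal{G}_{K}$. Therefore $\mathcal{A}=\mathcal{G}_{K}$ as groups, and $\mathcal{G}_{K}\simeq Aut(G)$. This is the analogue of Corollary \ref{cor1}.

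For $(iv)$, the ``if'' direction is exactly the argument of $(iii)$ run with the weaker hypothesis over $\mathcal{A}$. The ``only if'' direction is where the main obstacle lies, as it already is in Theorem \ref{t38}. An abstract isomorphism $\mathcal{G}_{K}\simeq Aut(G)$ does not obviously force the equality $\mathcal{G}_{K}=\mathcal{A}$. I would read the isomorphism as the one induced by the inclusion $\mathcal{G}_{K}\subseteq\mathcal{A}$ from $(i)$, that is, as saying this inclusion is onto. Then every $\theta\in\mathcal{A}$ lies in $\mathcal{G}_{K}$, so $\det_{K}(\theta)$ is defined, which by definition means $\alpha$ is invertible. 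If one insists on an abstract isomorphism, the equality may fail for infinite groups. The plan is therefore to state explicitly that ``$\simeq$'' is meant via the natural inclusion, matching how the paper itself treats Theorem \ref{t38}.
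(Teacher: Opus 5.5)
Your proposal is correct and is essentially the paper's own argument: the paper simply transposes Remark~\ref{rem41}, Propositions~\ref{prop41}--\ref{pr41}, Corollary~\ref{cor1} and Theorem~\ref{t38} to $\mathcal{G}_{K}$, using Theorems~\ref{t31}$(i)$ and~\ref{t32} in place of Theorems~\ref{t31}$(ii)$ and~\ref{t33}. Your explicit treatment of the ``only if'' part of $(iv)$, which reads $\simeq$ as the isomorphism induced by the inclusion $\mathcal{G}_{K}\subseteq\mathcal{A}$, supplies a step the paper never states, and for finite $G$ no special reading is needed, since a subset of the finite set $\mathcal{A}$ in bijection with $\mathcal{A}$ must equal it.
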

	From Theorems \ref{t38} and \ref{th42}, the following theorem is well established.
\begin{theorem}\label{t43}
	Let $G = H \bowtie K$. Then if for all $\begin{pmatrix}
	\alpha & \beta \\ \gamma & \delta
	\end{pmatrix}\in \mathcal{M}$, both $\alpha$ and $\delta$ are invertible, then $\mathcal{G}_{H} = \mathcal{A} = \mathcal{G}_{K}$ and $\mathcal{G}_{H} \simeq Aut(G) \simeq \mathcal{G}_{K}$.
\end{theorem}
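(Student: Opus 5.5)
The plan is to derive Theorem \ref{t43} by combining Corollary \ref{cor1} (equivalently Theorem \ref{t38}) with Theorem \ref{th42}. The key observation is that the hypothesis is stated for all of $\mathcal{M}$, and $\mathcal{A}\subseteq\mathcal{M}$. Indeed, $\mathcal{A}$ is cut out of $\tilde{\mathcal{M}}$ by $(A_1)$--$(A_6)$, which are exactly $(i)$--$(vi)$, together with the extra condition $(A_7)$. So the hypothesis restricts to the statement that every element of $\mathcal{A}$ has invertible $\alpha$ and invertible $\delta$.

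\textbf{The $H$-side.} Since every $\delta$ occurring in $\mathcal{A}$ is invertible, Corollary \ref{cor1} gives $\mathcal{G}_{H}=\mathcal{A}$ and $\mathcal{G}_{H}\simeq Aut(G)$. To be explicit about why this holds: for $\theta\in\mathcal{A}$ with $\delta$ invertible, $\theta$ is an automorphism, so Theorem \ref{t33} makes $\det_H(\theta)$ defined and invertible, i.e.\ $\theta\in\mathcal{G}_H$. The reverse inclusion $\mathcal{G}_H\subseteq\mathcal{A}$ is Remark \ref{rem41}. Proposition \ref{pr41} shows that $\mathcal{G}_H$ is closed under composition and inverses, so the equality $\mathcal{G}_H=\mathcal{A}$ holds as groups.

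\textbf{The $K$-side.} Symmetrically, since every $\alpha$ in $\mathcal{A}$ is invertible, Theorem \ref{th42}$(ii)$ and $(iv)$, or directly part $(iii)$ using the hypothesis on $\mathcal{M}$, give $\mathcal{G}_K\le\mathcal{A}$ and $\mathcal{G}_K\simeq Aut(G)$. For the equality $\mathcal{G}_K=\mathcal{A}$, take any $\theta\in\mathcal{A}$. Its $\alpha$ is invertible, and $\theta$ is invertible, so Theorem \ref{t32} gives that $\det_K(\theta)$ is invertible; hence $\mathcal{A}\subseteq\mathcal{G}_K$. Conversely, Theorem \ref{t31}$(i)$ shows that every element of $\mathcal{G}_K$ is invertible, hence lies in $\mathcal{A}$.

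\textbf{Combining.} The two sides give $\mathcal{G}_H=\mathcal{A}=\mathcal{G}_K$. Composing with the identification $\mathcal{A}\simeq Aut(G)$ (via Theorem \ref{t1}, restricted to units) yields $\mathcal{G}_H\simeq Aut(G)\simeq\mathcal{G}_K$. One could alternatively obtain $\mathcal{G}_H=\mathcal{G}_K$ directly from Theorem \ref{t34}, since with both $\alpha$ and $\delta$ invertible, $\Delta_H$ is invertible if and only if $\Delta_K$ is.

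The only real subtlety is ensuring the identifications are group isomorphisms rather than mere bijections. This relies on the restriction of $T$ from Theorem \ref{t1} to the units of the monoid: $T$ is a monoid isomorphism, so it carries $Aut(G)$ onto the group of units of $\mathcal{M}$, which is $\mathcal{A}$. Once this is checked, the theorem follows immediately.
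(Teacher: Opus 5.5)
Your proposal is correct and follows essentially the paper's own route: the paper obtains Theorem~\ref{t43} simply by combining Theorem~\ref{t38} (via Corollary~\ref{cor1}) with Theorem~\ref{th42}, which is exactly what you do after restricting the hypothesis from $\mathcal{M}$ to $\mathcal{A}\subseteq\mathcal{M}$. As a side remark, the zero matrix lies in $\mathcal{M}$, so the hypothesis as literally stated already forces $H$ and $K$ to be trivial; your argument only uses the weaker hypothesis on $\mathcal{A}$, which is presumably what is intended.
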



\begin{definition}
	Let $\mathcal{G}_{H}$ and $\mathcal{G}_{K}$ be the groups defined as above. Then the set of all the determinants of the members of $\mathcal{G}_{H}$ and $\mathcal{G}_{K}$ are defined respectively as
	\begin{equation*}
	\mathcal{D}_{\mathcal{G}_{H}} = \{{\det}_{H}(\theta) \mid \theta \in \mathcal{G}_{H}\},
	\end{equation*}
	and
	\begin{equation*}
	\mathcal{D}_{\mathcal{G}_{K}} = \{{\det}_{K}(\theta) \mid \theta \in \mathcal{G}_{K}\}.
	\end{equation*}
\end{definition}

Using elementary group theory, one can easily prove the following lemma.
\begin{lemma}\label{l2}
	The set $\mathcal{D}_{\mathcal{G}_{H}}$ is a group with binary operation as the usual composition of maps and the identity element as $I_{H}$.
\end{lemma}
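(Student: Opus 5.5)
To prove Lemma \ref{l2}, I would verify the group axioms for $\mathcal{D}_{\mathcal{G}_{H}}$ under composition, using the results already established for $\mathcal{G}_{H}$. The natural starting point is that every element of $\mathcal{D}_{\mathcal{G}_{H}}$ is, by definition, an invertible map $H\to H$. So $\mathcal{D}_{\mathcal{G}_{H}}$ sits inside the symmetric group $\mathrm{Sym}(H)$. Composition is associative there, so it suffices to show that $\mathcal{D}_{\mathcal{G}_{H}}$ contains $I_{H}$ and is closed under composition and inversion.

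\textbf{Identity.} The identity matrix $\begin{pmatrix} 1 & 0\\ 0 & 1\end{pmatrix}\in\mathcal{M}$ has $\delta = I_{K}$ invertible. Its $H$-determinant is $1 - 0\cdot I_{K}^{-1}\cdot 0 = I_{H}$, which is invertible. Hence this matrix lies in $\mathcal{G}_{H}$ and $I_{H}\in\mathcal{D}_{\mathcal{G}_{H}}$.

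\textbf{Inverses.} Take $\theta = \begin{pmatrix}\alpha & \beta\\ \gamma & \delta\end{pmatrix}\in\mathcal{G}_{H}$ with $\Delta_{H} = \det_{H}(\theta)$. By Theorem \ref{t31}(ii), $\theta^{-1}$ has $(1,1)$-entry $\Delta_{H}^{-1}$ and $(2,2)$-entry $-\delta^{-1}\gamma\Delta_{H}^{-1}(-\beta\delta^{-1})+\delta^{-1}$, which is invertible since it equals $\det_{K}(\theta^{-1})^{-1}$-type data. To realize $\Delta_{H}^{-1}$ itself as an $H$-determinant, I would rather use the diagonal matrix $\begin{pmatrix}\Delta_{H}^{-1} & 0\\ 0 & 1\end{pmatrix}$. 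Its $H$-determinant is exactly $\Delta_{H}^{-1}$, so it suffices that this matrix lies in $\mathcal{M}$. Conditions $(i)$--$(vi)$ for a diagonal matrix with $\delta = 1$ reduce to two requirements: $\Delta_{H}^{-1}$ is a homomorphism of $H$, and it is compatible with the mutual actions, i.e.\ $\Delta_{H}^{-1}(k\cdot h) = k\cdot\Delta_{H}^{-1}(h)$ and $k^{\Delta_{H}^{-1}(h)} = k^{h}$.

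\textbf{Closure.} In the same way, for $\Delta,\Delta'\in\mathcal{D}_{\mathcal{G}_{H}}$, I would show $\Delta'\Delta = \det_{H}\begin{pmatrix}\Delta'\Delta & 0\\ 0 & 1\end{pmatrix}$ with that diagonal matrix in $\mathcal{G}_{H}$. The alternative is to use multiplicativity of $\det_{H}$ on products inside $\mathcal{G}_{H}$.

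\textbf{Main obstacle.} The key step is that determinants need not be homomorphisms: the first example in Section \ref{s3} shows $\det_{H}$ can fail this in general. So I would restrict to the setting where the lemma is meant, namely $\mathcal{G}_{H}$ a group as in Proposition \ref{pr41}. I would then try to prove $\det_{H}(\theta'\theta) = \det_{H}(\theta')\det_{H}(\theta)$ using the factorization $\theta = \begin{pmatrix}\alpha&0\\0&1\end{pmatrix}\begin{pmatrix}1&\alpha^{-1}\beta\delta^{-1}\\ \gamma&1\end{pmatrix}\begin{pmatrix}1&0\\0&\delta\end{pmatrix}$ from Theorem \ref{th32}, together with Lemma \ref{lem2}. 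These identify the determinant with $\alpha(1-\alpha^{-1}\beta\delta^{-1}\gamma)$, a composite of an element of $P$ with the automorphism from Lemma \ref{lem2}(i). This would show $\mathcal{D}_{\mathcal{G}_{H}}\subseteq P$. Closure and inverses then follow from the diagonal-matrix trick above, since elements of $P$ satisfy the compatibility conditions. Verifying this membership in $P$ is where I expect the real work to lie.
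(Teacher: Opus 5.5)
\textbf{There is a genuine gap, and it cannot be closed, because the lemma is false as stated.}

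Your reduction is sound as far as it goes. If every $H$-determinant lies in $P$, the diagonal-matrix trick gives $\mathcal{D}_{\mathcal{G}_H}=P$. Conversely, every $\phi\in P$ equals $\det_H(\mathrm{diag}(\phi,1))$, and $\mathrm{diag}(\phi,1)\in\mathcal{G}_H$.

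The gap is the step you flag as the real work, namely $\mathcal{D}_{\mathcal{G}_H}\subseteq P$. It fails even for elements of $\mathcal{G}_H$; the first example of Section~\ref{s3} is not one of these, since its determinant is not invertible. The factorization from Theorem~\ref{th32} that you want to use treats $\mathrm{diag}(\alpha,1)$ as an element of $A$, so it presupposes $\alpha\in P$. But $(A_1)$ only gives the twisted rule $\alpha(hh')=\alpha(h)(\gamma(h)\cdot\alpha(h'))$.

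Concretely, take the paper's own example $A_5=A_4\bowtie\mathbb{Z}_5$ of Section~\ref{s4}, and let $\theta$ be conjugation by $z$. Then $\theta(k)=k$, so $\beta$ is trivial and $\delta=I_K$. Also $\theta(h)=(z\cdot h)(z^hz^{-1})$, so $\alpha=\sigma_z$. Hence $\det_H(\theta)=\sigma_z$. This is a bijection of $H$ sending the involution $x$ to $xyx$, which has order $3$, so it is not even a homomorphism.

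Passing to the hypothesis of Proposition~\ref{pr41} does not help. Every automorphism of $A_5$ has $\delta$ bijective, because $K$ meets every conjugate of $H$ trivially. So in this example $\mathcal{G}_H=\mathcal{A}$ is already a group.

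In fact the lemma itself fails in this example. Since the trivial endomorphism has non-invertible $\delta$, we have $|\mathcal{D}_{\mathcal{G}_H}|\le|\mathrm{Aut}(A_5)|=120$. For $h_0\in H$, conjugation by $h_0$ has trivial $\gamma$, so its $H$-determinant is the inner automorphism $c_{h_0}$ of $A_4$. Thus $\sigma_z$ and $\mathrm{Inn}(A_4)$ both lie in $\mathcal{D}_{\mathcal{G}_H}$.

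Compose permutations right to left; this is the convention in which $zx=xyxz^2$ holds. Then $\sigma_z$ has two $5$-cycles:
\begin{itemize}
\item[$\bullet$] $(1\,2)(3\,4)\mapsto(1\,4\,2)\mapsto(1\,2\,4)\mapsto(1\,4\,3)\mapsto(1\,3\,4)\mapsto(1\,2)(3\,4)$;
\item[$\bullet$] $(1\,3)(2\,4)\mapsto(1\,2\,3)\mapsto(2\,3\,4)\mapsto(1\,3\,2)\mapsto(2\,4\,3)\mapsto(1\,3)(2\,4)$.
\end{itemize}
It fixes $1$ and $(1\,4)(2\,3)$.

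Since $\mathrm{Inn}(A_4)$ permutes the three double transpositions transitively, the subgroup of $\mathrm{Sym}(H)$ generated by $\sigma_z$ and $\mathrm{Inn}(A_4)$ is transitive on the $11$ non-identity elements. It also contains elements of orders $2$, $3$ and $5$. So its order is divisible by $330>120$.

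Hence $\mathcal{D}_{\mathcal{G}_H}$ is not closed under composition, and $\det_H$ is not multiplicative on $\mathcal{G}_H$. This is consistent with the cycle types $11+1$ and $7+4+1$ in the paper's own table captions, which cannot coexist in a group of order at most $120$.

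The paper gives no argument beyond ``elementary group theory'', so there is no proof to compare with. The statement needs an extra hypothesis, and the natural one is exactly $\mathcal{D}_{\mathcal{G}_H}\subseteq P$. Under that hypothesis your diagonal-matrix argument does prove $\mathcal{D}_{\mathcal{G}_H}=P$, which is a group.
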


In the following proposition, we prove that $\mathcal{D}_{\mathcal{G}_{H}}$ is a subgroup of $Aut(H)$.

\begin{proposition}\label{p1}
	Let $G = H\bowtie K$ such that $\det_{H}(\theta)$ is a group homomorphism for all $\theta \in \mathcal{G}_{H}$. Then $\mathcal{D}_{\mathcal{G}_{H}}\subseteq Aut(H)$.
\end{proposition}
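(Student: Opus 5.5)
The plan is to show that every element of $\mathcal{D}_{\mathcal{G}_{H}}$ is a bijective group homomorphism $H\to H$. Fix $\Delta\in\mathcal{D}_{\mathcal{G}_{H}}$. By definition $\Delta = \det_{H}(\theta)$ for some $\theta = \begin{pmatrix}\alpha & \beta\\ \gamma & \delta\end{pmatrix}\in\mathcal{G}_{H}$. Hence $\delta$ is invertible and $\det_{H}(\theta) = \alpha-\beta\delta^{-1}\gamma$ is a well-defined map $H\to H$, which is invertible as a map by the very definition of $\mathcal{G}_{H}$.

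First, $\Delta$ is a bijection of $H$ onto itself. This is immediate, since membership in $\mathcal{G}_{H}$ means that $\det_{H}(\theta)$ is defined and invertible, i.e.\ it has a two-sided inverse in $Map(H,H)$. Second, $\Delta$ preserves products, which is exactly the hypothesis of the proposition: $\det_{H}(\theta)$ is a group homomorphism for every $\theta\in\mathcal{G}_{H}$. A bijective homomorphism $H\to H$ is an automorphism, its set-theoretic inverse being automatically a homomorphism. So $\Delta\in Aut(H)$, which gives $\mathcal{D}_{\mathcal{G}_{H}}\subseteq Aut(H)$.

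To obtain the subgroup statement announced before the proposition, I would combine this with Lemma \ref{l2}. By that lemma, $\mathcal{D}_{\mathcal{G}_{H}}$ is a group under composition with identity $I_{H}$, the same operation and identity as in $Aut(H)$. A subset of $Aut(H)$ that is a group under the same operation is a subgroup. If one prefers not to rely on Lemma \ref{l2}, closure can be checked directly. The identity endomorphism of $G$ lies in $\mathcal{G}_{H}$ with $\det_{H} = I_{H}$. For inverses, Theorem \ref{t31}(ii) gives $\theta^{-1}\in\mathcal{G}_{K}$ with $\det_{K}(\theta^{-1}) = \delta^{-1}$; I would pass to $\det_{H}$ via Theorem \ref{t34} when $\alpha$ is also invertible.

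The only point needing care is not the homomorphism property, which is assumed, but bijectivity. Bijectivity is not automatic, as the first example in Section \ref{s3} shows, but it is built into the definition of $\mathcal{G}_{H}$. So the argument is short, and I expect no real obstacle beyond keeping the definitions straight.
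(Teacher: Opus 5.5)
Your proof is correct and is the paper's argument. Each element of $\mathcal{D}_{\mathcal{G}_{H}}$ is $\det_{H}(\theta)$ for some $\theta\in\mathcal{G}_{H}$, so it is bijective by the definition of $\mathcal{G}_{H}$ and a homomorphism by hypothesis, hence lies in $Aut(H)$. The extra subgroup paragraph is not part of the statement, and its direct inverse-closure sketch does not work as written. By Theorem \ref{t31}, passing to $\theta^{-1}$ gives $\det_{H}(\theta^{-1})=\alpha^{-1}$, not $\det_{H}(\theta)^{-1}$, and closure under composition is never addressed; either rely on Lemma \ref{l2} or drop that paragraph.
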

\begin{proof}
	Let $\theta\in \mathcal{G}_{H}$ such that $\det_{H}(\theta)$ is a group homomorphism. Then $\det_{H}(\theta)\in \mathcal{D}_{\mathcal{G}_{H}}$. Since $\theta \in \mathcal{G}_{H}$, $\det_{H}(\theta)$ is defined and invertible. Therefore, $\det_{H}(\theta)\in Aut(H)$. Hence $\mathcal{D}_{\mathcal{G}_{H}}\subseteq Aut(H)$.
	
\end{proof}
Using the similar arguments as in Lemma \ref{l2} and Proposition \ref{p1}, we have similar results for $\mathcal{D}_{\mathcal{G}_{K}}$ given below.
\begin{proposition}\label{p2}
	Let $G = H\bowtie K$ and $\theta \in \mathcal{G}_{K}$. Then 
	\begin{itemize}
		\item[$(i)$] $\mathcal{D}_{\mathcal{G}_{K}}$ is a group with the usual composition of maps,
		\item[$(ii)$] if $\det_{K}(\theta)$ is a group homomorphism for all $\theta \in \mathcal{G}_{K}$, then  $\mathcal{D}_{\mathcal{G}_{K}}\subseteq Aut(K)$.
	\end{itemize} 
\end{proposition}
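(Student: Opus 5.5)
I would prove Proposition \ref{p2} by running the arguments of Lemma \ref{l2} and Proposition \ref{p1} again, with the roles of $H$ and $K$ swapped and Theorem \ref{t32} used in place of Theorem \ref{t33}. Throughout, every $\theta\in\mathcal{G}_{K}$ has $\alpha$ invertible and $\Delta_{K}=\det_{K}(\theta)$ invertible. So Theorem \ref{t31}$(i)$ applies: $\theta$ is an automorphism, with the explicit inverse given there, and the lower right entry of $\theta^{-1}$ is ${\Delta_{K}}^{-1}$.

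\textbf{Part $(i)$.} The identity $I_{K}$ lies in $\mathcal{D}_{\mathcal{G}_{K}}$, since $\begin{pmatrix}1&0\\0&1\end{pmatrix}\in\mathcal{G}_{K}$ has $\det_{K}=-0\cdot 1^{-1}\cdot 0+1=I_{K}$. Associativity is inherited from composition of maps. For inverses, take $\theta\in\mathcal{G}_{K}$ and view $\theta^{-1}$ as an element of $\mathcal{M}$.

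1. If its upper left entry $\alpha^{\prime}=\alpha^{-1}-\alpha^{-1}\beta{\Delta_{K}}^{-1}(-\gamma\alpha^{-1})$ is invertible, then applying Theorem \ref{t32} to the automorphism $\theta^{-1}$ shows that $\det_{K}(\theta^{-1})$ is defined and invertible.
2. I would then compute $\det_{K}(\theta^{-1})$ from the matrix in Theorem \ref{t31}$(i)$, as in the proof of the "Moreover" clause there. The goal is to show $\det_{K}(\theta^{-1})=\Delta_{K}^{-1}$, which exhibits the inverse of $\Delta_{K}$ inside $\mathcal{D}_{\mathcal{G}_{K}}$.

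For closure under composition, take $\theta,\theta^{\prime}\in\mathcal{G}_{K}$. Then $\theta^{\prime}\theta$ is an automorphism, and its upper left entry is $\alpha^{\prime}\alpha+(\gamma^{\prime}\alpha\cdot\beta^{\prime}\gamma)$. If that entry is invertible, Theorem \ref{t32} puts $\theta^{\prime}\theta$ in $\mathcal{G}_{K}$. I would then try to show $\det_{K}(\theta^{\prime}\theta)=\det_{K}(\theta^{\prime})\det_{K}(\theta)$ by acting on the vector $\begin{pmatrix}(\alpha^{-1}\beta(k))^{-1}\\ k\end{pmatrix}$. This is the same device used in the proof of Theorem \ref{t32}, which shows $\theta$ sends that vector to $\begin{pmatrix}1\\ \Delta_{K}(k)\end{pmatrix}$.

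\textbf{Main obstacle.} The hard step is exactly this group structure: that $\mathcal{D}_{\mathcal{G}_{K}}$ is closed under composition and inverses. It needs the invertibility of the relevant $\alpha$-entries, and at that point the $H$-analogue (Lemma \ref{l2}) is stated only as "elementary". My first attempt would be to use the orbit description above. For $\theta\in\mathcal{G}_{K}$, the map $\Delta_{K}$ is $\theta$ restricted to $\theta^{-1}(K)\cap\{\text{elements of the form } hk\}$, followed by projection to $K$. From this one hopes to deduce multiplicativity whenever $\theta^{\prime}$ preserves the relevant fibre. If that fails, I would fall back on the standing hypothesis used in Theorem \ref{th42}$(ii)$, namely that $\alpha$ is invertible for all elements of $\mathcal{A}$. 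This is the hypothesis that makes $\mathcal{G}_{K}$ a group.

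\textbf{Part $(ii)$.} This part is immediate. Each $\det_{K}(\theta)$ with $\theta\in\mathcal{G}_{K}$ is invertible by definition of $\mathcal{G}_{K}$, and by hypothesis it is a homomorphism $K\to K$. So it is a bijective endomorphism of $K$, that is, an element of $Aut(K)$. Hence $\mathcal{D}_{\mathcal{G}_{K}}\subseteq Aut(K)$, exactly as in Proposition \ref{p1}.
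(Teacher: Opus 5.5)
\textbf{Part $(ii)$ is fine.} It is exactly the argument of Proposition~\ref{p1}. The gap is in part $(i)$, and it is not just a missing verification: both identities you aim for are false.

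From the proof of Theorem~\ref{t32}, $\det_K(\theta)(k)=\theta\big((\alpha^{-1}\beta(k))^{-1}k\big)$ is the unique element of $K\cap\theta(H)\theta(k)$. So $\det_K(\theta)$ is $\theta|_K$ followed by projection onto $K$ along the complement $\theta(H)$. In $\det_K(\theta')\det_K(\theta)$ you project along $\theta(H)$ and then $\theta'(H)$, whereas $\det_K(\theta'\theta)$ projects along $\theta'\theta(H)$. There is therefore no reason for multiplicativity, and it fails already in the abelian case. For $G=\mathbb{Z}_p\times\mathbb{Z}_p$ ($p$ odd) and $\theta=\begin{pmatrix}1&1\\1&2\end{pmatrix}$:
\begin{itemize}
\item[$\cdot$] $\det_K(\theta)=1$ but $\det_K(\theta^2)=5-\tfrac{9}{2}=\tfrac12$;
\item[$\cdot$] $\det_K(\theta^{-1})=\tfrac12\neq\Delta_K^{-1}$. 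In general Theorem~\ref{t31}$(ii)$ gives $\det_K(\theta^{-1})=\delta^{-1}$, not $\Delta_K^{-1}$.
\end{itemize}
Your fallback hypothesis (every $\alpha$ invertible on $\mathcal{A}$) changes the statement, and it only makes $\mathcal{G}_K=\mathcal{A}$ a group. $\mathcal{D}_{\mathcal{G}_K}$ is the image of that group under the non-multiplicative map $\theta\mapsto\det_K(\theta)$, so closure still does not follow.

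\textbf{Part $(i)$ is false as stated, so nothing can close the gap.} The paper gives no argument beyond the analogy with Lemma~\ref{l2}, asserted via ``elementary group theory'', and that lemma fails for the same reason. Here is a counterexample, composing permutations right to left.
\begin{itemize}
\item[$\cdot$] Take $G=A_5$, $H=\langle z\rangle$ with $z=(1\,2\,3\,4\,5)$, and $K\cong A_4$ the stabiliser of $5$.
\item[$\cdot$] Each $\theta\in\mathrm{Aut}(G)$ is conjugation by some $\sigma\in S_5$. Since $\theta(H)$ is a Sylow $5$-subgroup, $\theta(H)\cap K=1$.
\item[$\cdot$] Since $\alpha(h)=\alpha(h')$ forces $\theta(h'^{-1}h)\in K$, $\alpha$ is injective, hence bijective. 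Theorem~\ref{t32} then puts all $120$ automorphisms in $\mathcal{G}_K$. So $|\mathcal{D}_{\mathcal{G}_K}|\le 120$, and every $\det_K(\theta)$ fixes the identity of $K$.
\item[$\cdot$] For $\sigma\in S_4$ we have $\theta(K)=K$, so $\det_K(\theta)=\theta|_K$. These $24$ maps form a subgroup of $\mathrm{Sym}(K)$ whose orbits on $K\setminus\{1\}$ are the three involutions and the eight $3$-cycles.
\item[$\cdot$] For $\sigma=(1\,5)$, however, $\det_K(\theta)\big((1\,2)(3\,4)\big)=\sigma z^{-1}(1\,2)(3\,4)\sigma^{-1}=(1\,4\,2)$, which sends an involution to a $3$-cycle.
\end{itemize}
If $\mathcal{D}_{\mathcal{G}_K}$ were a group, it would therefore act transitively on the $11$ non-identity elements of $K$. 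Its order would be divisible by $24$ and by $11$, hence at least $264>120$, a contradiction.

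The same obstruction appears in the paper's own data in Section~\ref{s4}. $H$-determinants of cycle types $11+1$ and $7+4+1$ cannot coexist in a group of order at most $120$.

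So only $(ii)$ can be proved as stated. Part $(i)$ needs an extra hypothesis, for instance $\theta(H)=H$ for every automorphism $\theta$; then $\theta\mapsto\det_K(\theta)$ is a homomorphism and its image is a group.
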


\section{Determinant of automorphisms of the Alternating group $A_{5}$}\label{s4}
In this section, we will compute both the determinats of the automorphisms of the alternating group $A_{5}$.

Throughout this section, we will denote $x = (1\; 2)(3\; 4)$, $y = (1\; 2\; 3)$ and $z = (1\; 2\; 3\; 4\; 5)$. Now, let $H = \langle x, y\mid x^{2} = y^{3} = (xy)^{3}\rangle$ and $K = \langle z\mid z^{5} = 1\rangle$. Then $H \simeq A_{4}$, the alternating group of degree 4 and $K \simeq \mathbb{Z}_{5}$. Now, define the mutual actions $\sigma: K\times H \longrightarrow H$ and $\tau: K\times H \longrightarrow K$ as follows:
\begin{align*}
\sigma(z, x) = z\cdot x = xyx, \; \sigma(z, y) = z\cdot y = y^{2}x, \; \tau(z, x) = z^{x} = z^{2},\; \text{and}\; \tau(z, y) = z^{y} = z. 
\end{align*}
Thus we get $H\bowtie K = \langle x,y,z\mid x^{2} = 1 = y^{3} = z^{5} = (xy)^{3}, zx = xyxz^{2}, zy = y^{2}xz\rangle$. One can easily observe that $A_{5} \simeq H \bowtie K$. It is already known that $Aut(A_{5}) \simeq S_{5}$. 

Since the group $H$ has 12 elements, we label the elements of $H$ as follows:
\begin{align*}
&1\mapsto 0,  x \mapsto 1,  y\mapsto 2,  y^{2}\mapsto 3,  xy \mapsto 4,  yx \mapsto 5, xy^{2}\mapsto 6,\\  &y^{2}x \mapsto 7,  xyx\mapsto 8,  yxy\mapsto 9,  yxy^{2} \mapsto 10,  y^{2}xy \mapsto 11.
\end{align*}
Similarly, we label the elements of $K$ as follows: 
\begin{align*}
1 \mapsto 0,  z \mapsto 1,  z^{2} \mapsto 2,  z^{3} \mapsto 3,  z^{4}\mapsto 4.
\end{align*}
Then it is clear that $\mathcal{D}_{\mathcal{G}_{H}} \subseteq S_{12}$ and $\mathcal{D}_{\mathcal{G}_{K}} \subseteq S_{5}$. We will use the above labeling for the cycle decomposition in the following tables. In the following tables, C. D. will denote the cycle decomposition.

 Now, we compute both the determinants for all the automorphisms of $A_{5}$ and represent them as members of their respective symmetric groups in the following Tables \ref{tbl1} -- \ref{tbl14}. Each table corresponds to the cycle type of $H$--determinants. 

All the cycles types of the $H$--determinants corresponding to each automorphism of $A_{5}$ are given in the following table.
%
	}
	\caption{$\det_{H}$ and $\det_{K}$ corresponding to the cycle type $2+2+2+2+2+1+1$ of $\det_{H}$}\label{tbl1}
\end{table}

\begin{table}[H]
\noindent
\resizebox{\textwidth}{!}{%
	%
%
}
\caption{$\det_{H}$ and $\det_{K}$ corresponding to the cycle type $2+2+2+2+1+1+1+1$ of $\det_{H}$}
\end{table}	
\begin{table}[H]
\noindent
\resizebox{\textwidth}{!}{%
	%
%
}
\caption{$\det_{H}$ and $\det_{K}$ corresponding to the cycle type $3+3+3+1+1+1$ of $\det_{H}$}
\end{table}
\begin{table}[H]
	\noindent
	\resizebox{\textwidth}{!}{%
		%
%
	}
	\caption{$\det_{H}$ and $\det_{K}$ corresponding to the cycle type $4+4+2+1+1$ of $\det_{H}$}
\end{table}
\begin{table}[H]
\noindent
\resizebox{\textwidth}{!}{%
	%
%
}
\caption{$\det_{H}$ and $\det_{K}$ corresponding to the cycle type $5+5+1+1$ of $\det_{H}$}
\end{table}
%
}
\caption{$\det_{H}$ and $\det_{K}$ corresponding to the cycle type $6+4+1+1$ of $\det_{H}$}
\end{table}

%
	}
	\caption{$\det_{H}$ and $\det_{K}$ corresponding to the cycle type $6+3+2+1$ of $\det_{H}$}
\end{table}
\begin{table}[H]
	\noindent
	\resizebox{\textwidth}{!}{%
		%
%
	}
	\caption{$\det_{H}$ and $\det_{K}$ corresponding to the cycle type $6+3+1+1+1$ of $\det_{H}$}
\end{table}
	\begin{table}
\noindent
\resizebox{\textwidth}{!}{%
	%
%
}
\caption{$\det_{H}$ and $\det_{K}$ corresponding to the cycle type $6+1+1+1+1+1+1$ of $\det_{H}$}
\end{table}
	\begin{table}
	\noindent
	\resizebox{\textwidth}{!}{%
		%
%
}
	\caption{$\det_{H}$ and $\det_{K}$ corresponding to the cycle type $7+4+1$ of $\det_{H}$}
\end{table}

	\begin{table}
	\noindent
	\resizebox{\textwidth}{!}{%
		%
%
}
\caption{$\det_{H}$ and $\det_{K}$ corresponding to the cycle type $8+3+1$ of $\det_{H}$}
\end{table}

	\begin{table}
		\noindent
		\resizebox{\textwidth}{!}{%
			%
%
		}
		\caption{$\det_{H}$ and $\det_{K}$ corresponding to the cycle type $8+2+1+1$ of $\det_{H}$}
	\end{table}

	\begin{table}
		\noindent
		\resizebox{\textwidth}{!}{%
			%
%
		}
		\caption{$\det_{H}$ and $\det_{K}$ corresponding to the cycle type $10+1+1$ of $\det_{H}$}
	\end{table}

\begin{table}
	\noindent
	\resizebox{\textwidth}{!}{%
		%
%
	}
	\caption{$\det_{H}$ and $\det_{K}$ corresponding to the cycle type $11+1$ of $\det_{H}$}\label{tbl14}
\end{table}


\newpage
	\begin{thebibliography}{00}
	
	\bibitem{bcm} J.N.S. Bidwell, M.J. Curran and D.J. McCaughan, Automorphisms of direct products of finite groups, \textit{Arch. Math.}, 86 (2006), 481--489.
		
		\bibitem{bc} J. N. S. Bidwell and M. J. Curran,	The automorphism group of a split metacyclic p-group, \textit{Arch. Math.}, 87 (2006), 488--497.
		
		\bibitem{jb} J.N.S. Bidwell, Automorphisms of direct products of finite groups II, \textit{Arch. Math.}, 91 (2006), 111--121.
		
		\bibitem{mb} M. Brescia, A determinant for automorphisms of groups, \textit{Commun. Algebra}, 53(6) (2025), 2484--2509. 
		
		\bibitem{cur} M. J. Curran, Automorphisms of semidirect products, \textit{Math. Proc. R. Ir. Acad.}, 108 (2008), 205--210.
		
		\bibitem{jd} J. Dietz, Automorphisms of products of groups, In CM. Campbell, M.R. Quick, E.F. Robertson, G.C. Smith (eds), Groups St. Andrews, LMS Lecture Notes Series 339 (2005), 28--305.
		
		\bibitem{af} A. Firat and C. Sinan, Knit products of some groups and their applications, \textit{Rend. Semin. Mat. Univ. Padova}, 121 (2009), 1--11.
		
		\bibitem{hsu} 	N.C. Hsu, The group of automorphisms of the holomorph of a group, \textit{Pacific J. Math.}, 11 (1961), 999--1012.
		
		\bibitem{zapfix}   V. Kakkar and R. Lal, Automorphisms of Zappa-Sz\'{e}p product fixing a subgroup, \textit{Acta Univ. Sapientiae, Mathematica}, 15(2) (2023), 288--303.
		
		\bibitem{zap} R. Lal and V. Kakkar, Automorphisms of Zappa-Sz\'{e}p product, \textit{Adv. Group Theory Appl.}, 14 (2022), 95--135.
		
		\bibitem{detsem} R. Lal, A. Choudhary and V. Kakkar, Determinant for automorphisms of semidirect product of groups, \textit{	arXiv:2507.18456}, (2025). https://doi.org/10.48550/arXiv.2507.18456
		
	
	\end{thebibliography}
\end{document}